\tikzset{% for drawing adjunctions
    symbol/.style={%
        draw=none,
        every to/.append style={%
            edge node={node [sloped, allow upside down, auto=false]{$#1$}}}
    }
}
\newcommand{\B}{\mathcal{B}}
\newcommand{\C}{\mathcal{C}}
\newcommand{\V}{\mathbf{V}}
\newcommand{\D}{\mathcal{D}}
\newcommand{\F}{\mathcal{F}}
\newcommand{\M}{\mathcal{M}}
\newcommand{\K}{\mathcal{K}}
\renewcommand{\P}{\mathcal{P}}
\newcommand{\cat}[1]{\mathsf{#1}}
\newcommand{\Cat}{\cat{Cat}}
\newcommand{\Catv}{\V\textsf{-}\Cat}
\newcommand{\Set}{\cat{Set}}
\newcommand{\sSet}{\cat{sSet}}
\newcommand{\Top}{\cat{Top}}
\newcommand{\Vect}{\cat{Vect}_k}
\newcommand{\OpFib}{\cat{opFib}}
\newcommand{\Fun}{\cat{Fun}^\mathsf{ps}}
\newcommand{\I}{\mathbbm{1}}
\newcommand{\To}{\Rightarrow}
\newcommand{\Tto}{\Rrightarrow}
\newcommand{\cE}{\mathcal{E}}
\renewcommand{\1}{\mathbf{1}}
\newcommand{\lift}[1]{{#1}_!} % So that we can easily change it if we decide to.
\newcommand{\Inv}{I}
\theoremstyle{definition} 
\newtheorem{defi}{Definition}[section]
\newtheorem{rem}[defi]{Remark}
\newtheorem{example}[defi]{Example}
\theoremstyle{plain}
\newtheorem{thm}[defi]{Theorem}
\newtheorem*{thm*}{Theorem}
\newtheorem{lem}[defi]{Lemma}
\newtheorem{prop}[defi]{Proposition}
\begin{document}

\title{The Enriched Grothendieck Construction}
\author{Jonathan Beardsley and Liang Ze Wong}
\maketitle

\begin{abstract}
  We define and study opfibrations of $\V$-enriched categories when $\V$ is an extensive monoidal category whose unit is terminal and connected. 
  This includes sets, simplicial sets, categories, or any locally cartesian closed category with disjoint coproducts and connected unit.
  We show that for an ordinary category $B$, there is an equivalence of 2-categories between $\V$-enriched opfibrations over the free $\V$-category on $B$, and pseudofunctors from $B$ to the 2-category of $\V$-categories. 
  This generalizes the classical ($\Set$-enriched) Grothendieck correspondence. 
\end{abstract}

\tableofcontents

\section{Introduction}
The Grothendieck construction and its inverse relate stacks on a Grothendieck site to fibrations, or fibered categories, over that site. 
More generally, for any category $B$, there is an equivalence between pseudofunctors $B^{op} \to \Cat$ and fibrations over $B$ \cite[Theorem B1.3.6]{johnstone1}; by duality, there is also an equivalence between pseudofunctors $B \to \Cat$ and \emph{op}fibrations over $B$.
\[
	\OpFib(B) \cong \Fun(B, \Cat).
\]
We generalize this equivalence to categories enriched over a suitable monoidal category $\V$.
Our main result is Theorem \ref{mainthm}:
\begin{thm*}
	Let $\V$ be a monoidal category satisfying the assumptions in \S\ref{sec:grcon}, and let $B_\V$ be the free $\V$-category on a category $B$. 
	There is a $2$-equivalence
	\[
	\OpFib(B_\V) \cong \Fun(B, \Catv).
	\]	
\end{thm*}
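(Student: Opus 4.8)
The plan is to exhibit a pair of $2$-functors $\Phi\colon \Fun(B,\Catv)\to \OpFib(B_\V)$ and $\Psi\colon \OpFib(B_\V)\to \Fun(B,\Catv)$ and show they are mutually pseudo-inverse. The $2$-functor $\Phi$ is an enriched Grothendieck construction: given a pseudofunctor $F\colon B\to\Catv$, let $\textstyle\int_\V F$ be the $\V$-category whose set of objects is $\coprod_{b\in B}\mathrm{ob}\,F(b)$ and whose hom-objects are
\[
\textstyle\left(\int_\V F\right)\!\bigl((b,x),(b',x')\bigr)\;\cong\;\coprod_{f\in B(b,b')}F(b')\bigl(F(f)(x),\,x'\bigr),
\]
with composition assembled from composition in each $F(b')$ together with the coherence isomorphisms $F(g)\circ F(f)\cong F(gf)$; associativity and unitality reduce to the pseudofunctor axioms. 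The projection $\textstyle\int_\V F\to B_\V$ sends $(b,x)\mapsto b$ and acts on hom-objects by the summandwise map $F(b')(F(f)(x),x')\to\I$, and one checks it is a $\V$-opfibration, the opcartesian lifts of $f\colon b\to b'$ at $x$ being those represented by $\mathrm{id}_{F(f)(x)}$.

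For $\Psi$, the essential structural input is a decomposition lemma: since $B_\V(b,b')=\coprod_{f\in B(b,b')}\I$ and $\V$ is extensive with terminal connected unit, any $\V$-functor $p\colon E\to B_\V$ induces, for $x,y$ lying over $b,b'$, a canonical splitting $E(x,y)\cong\coprod_{f\in B(b,b')}E_f(x,y)$ (with $E_f(x,y)$ the pullback of $p_{x,y}$ along the $f$-th coproduct inclusion), compatible with composition and units in $E$; in particular the fibers $E_b$ — objects over $b$, hom-objects $E_{\mathrm{id}_b}(-,-)$ — are genuine $\V$-categories. Choosing opcartesian lifts, define $F=\Psi(E)$ by $F(b)=E_b$ and, for $f\colon b\to b'$, letting $F(f)\colon E_b\to E_{b'}$ be the pushforward $\V$-functor determined by the lifts of $f$; the universal property of opcartesian morphisms supplies the coherence isomorphisms making $F$ a pseudofunctor and makes $\Psi$ independent of the cleavage up to canonical equivalence.

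Next, both assignments are promoted to $2$-functors: $\Psi$ sends a morphism of $\V$-opfibrations over $B_\V$ to the pseudonatural transformation whose components are the induced $\V$-functors on fibers (naturality cells again from opcartesianness) and a $2$-cell to a modification, while $\Phi$ sends pseudonatural transformations and modifications to the evident morphisms and $2$-cells between Grothendieck constructions. It then remains to produce natural equivalences $\Psi\Phi\simeq\mathrm{id}$ and $\Phi\Psi\simeq\mathrm{id}$. The first is immediate: the fiber of $\textstyle\int_\V F$ over $b$ is $F(b)$ and its pushforwards are the $F(f)$, up to the chosen coherence data. The second asks that for any cloven $\V$-opfibration $E\to B_\V$ the comparison $\V$-functor $\textstyle\int_\V\Psi(E)\to E$ — identity on objects, and on hom-objects built summand by summand out of the opcartesian lifts — be an equivalence of $\V$-opfibrations; surjectivity on objects is clear, and fully faithfulness on hom-objects is precisely the statement that every morphism of $E$ factors, compatibly with the splitting, through an opcartesian lift followed by a fiber morphism, i.e.\ the decomposition lemma combined with the defining property of the cleavage.

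I expect the main obstacle to be the decomposition lemma and its compatibilities: extracting from the bare hypotheses on $\V$ (extensivity, unit terminal and connected) that $\V$-functors into a free $\V$-category split their hom-objects coherently into pieces indexed by morphisms of $B$, and that composition, units, and opcartesian lifts all respect this splitting, is what makes the enriched argument run parallel to the classical one. Once it is in hand, the remaining work — the pseudofunctor axioms for $\Psi(E)$, the $2$-functoriality of $\Phi$ and $\Psi$, and the two pseudo-inverse equivalences — is coherence bookkeeping rather than new ideas. A secondary point requiring care is verifying that the various maps produced on hom-objects (pushforward $\V$-functors, comparison $\V$-functors) are genuine $\V$-morphisms and strictly or pseudo $\V$-functorial, which again rests on the naturality of the coproduct decomposition.
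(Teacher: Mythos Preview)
Your proposal is correct and follows essentially the same route as the paper: your $\Phi$ and $\Psi$ are the paper's $Gr$ and $I$, and your ``decomposition lemma'' is exactly the content of the paper's Lemmas~5.1--5.2 (extensivity gives $\cE(e,e')\cong\coprod_f \cE_f(e,e')$, and opcartesianness identifies each $\cE_f(e,e')$ with a fiber hom $\cE_{pe'}(\lift{f}e,e')$). The only packaging difference is that the paper organizes the $\Phi\Psi\simeq\mathrm{id}$ step by exhibiting $Gr\,\cE_\bullet$ as the pullback of $\cE$ along the counit $\sigma_\B\colon(\B_0)_\V\to\B$, which becomes the identity when $\B=B_\V$; this is equivalent to your fully-faithful-plus-bijective-on-objects argument but yields a strict $2$-natural isomorphism rather than merely an equivalence.
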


Our paper is structured in the following manner:
In \S \ref{sec:prelims}, we briefly recall some notions from enriched category theory and $2$-category theory that will be used in the rest of the paper, leaving the details to Appendix \ref{sec:appendix}.

In \S \ref{sec:opfibinvgrcon} we define $\V$-enriched opfibrations and show that such opfibrations $p \colon \cE \to \B$ give rise to pseudofunctors $\B_0 \to \Catv$, where $\B_0$ is the underlying category of the $\V$-category $\B$. 
This gives the \emph{inverse Grothendieck construction}
\[
	I \colon \OpFib(\B) \to \Fun(\B_0, \Catv).
\]

In \S \ref{sec:grcon} we show that, under suitable assumptions on $\V$, pseudofunctors $B \to \Catv$ give rise to opfibrations over $B_\V$, the free $\V$-category on $B$. This gives the \emph{Grothendieck construction}
\[
	Gr \colon \Fun(B, \Catv) \to \OpFib(B_\V).
\]
Our hypotheses on $\V$ ensure that $(B_\V)_0 \cong B$, so that it makes sense to ask if our constructions are mutual inverses when $\B = B_\V$.
In \S \ref{sec:grcorr}, we answer this in the affirmative, yielding our main result.

We make use of standard notions and techniques from enriched category theory, and as far as possible, try to relate our constructions back to the classical Grothendieck construction (i.e.\ the $\Set$-enriched case). 
In fact, one of the features of the construction given here is that a young mathematician well versed in the definitions of enriched and 2-category theory could understand all of our proofs. 

\subsection{Motivation and relation to other work}

The original motivation for this work goes back to the PhD thesis of the first author (some of which is described in \cite{beardsrelative}). 
One of the goals of that thesis was to describe certain \emph{coalgebraic} structures (e.g.\ bialgebras and comodules) in quasicategories, which are one of several models for $(\infty,1)$-categories. 
The first author found that the $(\infty,1)$-categorical Grothendieck construction described in \cite{htt} and expanded upon in \cite{ha} were not rigid enough for these purposes. 
Recalling from \cite{bergner} that we may think of simplicially enriched categories as a model for $(\infty,1)$-categories, we may then think of the enriched Grothendieck construction given in this paper, when $\V=\sSet$, as a rigidified version of Lurie's quasicategorical Grothendieck construction over an ordinary category.
This perspective is developed further in \cite{beardsley2018operadic}.

Our inverse Grothendieck construction (\S \ref{sec:opfibinvgrcon}) is an instance of the $2$-functor $\OpFib_\K(\B) \to \Fun(\B_0, \K)$ that arises from an opfibration $\cE \to \B$ in any $2$-category $\K$ with finite $2$-limits. 
This $2$-functor is due to Ross Street, but precise references are hard to find (see \cite[\S 6]{riehl2017comprehension} for the analogous construction in an $\infty$-cosmos, from which the reader may distill the original $2$-categorical construction).
% Alternatively, one anonymous referee has pointed out that one may construct a morphism between a pair of monads whose categories of pseudoalgebras are $\OpFib_\K(\B)$ and $\Fun(\B_0, \K)$.
A reader who is familiar with these results may safely skip this section.

An anonymous referee has pointed out that the Grothendieck construction (\S \ref{sec:grcon}) and the Grothendieck correspondence (\S \ref{sec:grcorr}) factor through a simpler correspondence between pseudofunctors $B \to \Catv$ and opfibered $B$-\emph{parametrized} $\V$-categories. 
In other words, letting $\V_B \textsf{-}\OpFib$ be the category of opfibered $B$-parametrized $\V$-categories, there is a $2$-equivalence
\[
	\V_B\textsf{-}\OpFib \cong \Fun(B, \Catv).
\]
This result holds for arbitrary monoidal categories $\V$, and is seen to be a special case of \cite[\S 7.6]{kelly2002categories}.
A similar result for opfibered $B$-\emph{graded} $\V$-categories may be found in \cite[\S 2.4]{lowen2008hochschild}, where $\V$ is required to have coproducts which are preserved by $\otimes$.

However, these opfibered $B$-parametrized or $B$-graded categories are not \emph{functors} in $\Catv$. 
In light of these observations, our contribution in this work may alternatively be interpreted thus:~we identify properties of $\V$ under which opfibered $B$-parametrized $\V$-categories may be internalized as actual opfibrations over $B_\V$ in $\Catv$, so that we have an equivalence
\[
	\OpFib(B_\V) \cong \V_B\textsf{-} \OpFib \cong \Fun(B, \Catv).
\]

We note that there are other works dealing with various aspects of the Grothendieck construction for categories enriched over specific $\V$.
Lurie has essentially defined what an opfibration of $\sSet$-categories ought to be in \cite{htt}, although the results regarding the $\infty$-categorical Grothendieck correspondence are formulated in (marked) simplicial sets.
When $\V = \Cat$, Hermida \cite{hermida1999some}, Bakovic \cite{bakovicgrothendieck} and Buckley \cite{buckley2014fibred} give a fully enriched Grothendieck construction and Grothendieck correspondence.
% In both these cases, the $\V$-enrichment has the effect of giving higher categorical structure to $\V$-enriched categories, so that there is a natural sense in which $x\textsf{-}\Cat$ is itself an $x$-category when $x = 2$ or $(\infty,1)$. 
% One may thus consider $x$-functors from an arbitrary $x$-category $\B$ to $x\textsf{-}\Cat$.

By constrast, our Grothendieck construction only allows for pseudofunctors from an \emph{ordinary} category $B$ into $\Catv$, which correspond to opfibrations over the \emph{free} $\V$-category $B_\V$.
This is unavoidable at our level of generality, since $\Catv$ is not necessarily $\V$-enriched\footnote{Even if $\V$ is complete and symmetric monoidal closed, $\Catv$ is only enriched over $\Catv$, not $\V$.}, so that it does not make sense to talk about $\V$-functors $\B \to \Catv$ from an arbitrary $\V$-category $\B$ to $\Catv$. 
In addition, although we do define opfibrations over an arbitrary $\V$-enriched base $\B$, one anonymous referee has pointed out that in the case when $\V = \sSet$ or $\Cat$, our definition agrees with those given in \cite[2.4.1.10, 2.4.2.1]{htt} and \cite[2.1.6, 3.1.5]{buckley2014fibred} only when $\B = B_\V$.
Nevertheless, we believe this work is an important step towards a fully enriched Grothendieck correspondence relating opfibrations over $\B$ and $\V$-functors $\B \to \Catv$, under additional assumptions on $\V$ such that $\Catv$ is $\V$-enriched.

Finally, we note that there is already a preprint of Tamaki \cite{tamaki} which discusses an enriched Grothendieck construction.
However, the definition of an opfibration given there is equivalent to the classical definition (when $\V = \Set$) only when the base category $B$ is a groupoid.
While we have taken some inspiration from \cite{tamaki}, our work is significantly different.

\subsection{Acknowledgements}

The authors thank the two anonymous referees for their very detailed reviews and helpful suggestions, which have greatly simplified and improved this paper. 
We would also like to thank James Zhang for his support and mentorship during the completion of this paper.

\section{Preliminaries} 
\label{sec:prelims}
	We begin by recalling a few notions from enriched category theory and $2$-category theory that will be used in this paper.
	Throughout, we work over a monoidal category $(\V, \otimes, \1)$.

\subsection{Properties of $\V$} \label{sec:Vprop}
	We describe a few additional properties of $\V$ that we will later require. 
	\begin{enumerate}
	\item 	If $\V$ has coproducts, we say that \emph{$\otimes$ preserves coproducts (in both variables)} if there is a canonical isomorphism
	\[
		\bigg(\coprod_{i \in I} A_i\bigg) \otimes \bigg(\coprod_{j \in J} B_j\bigg) \cong \coprod_{i \in I} \coprod_{j \in J} A_i \otimes B_j.
	\]

	\item 	If $\V$ has pullbacks and coproducts, we say that $\V$ is \emph{extensive} if pullbacks interact well with coproducts in the following sense:
	\begin{enumerate}[label = (\roman*)]
		\item \emph{Pullbacks preserve coproduct injections}: For any set $I$ and family of maps $f_i \colon Y_i \to X_i$ in $\V$, the following square is a pullback:
		\[
			\begin{tikzcd}
				Y_i \ar[d, "f_i"'] \ar[r, hookrightarrow] & \coprod_{i \in I} Y_i \ar[d, "\coprod_i f_i"]
				\\
				X_i \ar[r, hookrightarrow] & \coprod_{i \in I} X_i
			\end{tikzcd}
		\] 
		\item \emph{Pullbacks preserve coproduct decompositions}: For any set $I$ and family of maps $f_i \colon X_i \to Z$ and $g \colon Y \to Z$ in $\V$, we have a canonical isomorphism
	\[
		Y \times_Z \left(\coprod_i X_i \right) \cong \coprod_i \left( Y \times_Z X_i \right),
	\]
	where these fibered products are given by the following pullback diagrams:
	\[
		\begin{aligned}
		\begin{tikzcd}
			Y \times_Z \left(\coprod_i X_i \right) \ar[r] \ar[d] \arrow[dr, phantom, "\lrcorner", very near start]   & Y\ar[d, "g"]
			\\
			\coprod_{i \in I} X_i \ar[r, "\coprod_i f_i"] & Z
		\end{tikzcd}
		\end{aligned}
		\quad \quad
		\begin{aligned}
		\begin{tikzcd}
			Y \times_Z X_i \arrow[dr, phantom, "\lrcorner", very near start] \ar[r] \ar[d]   & Y\ar[d, "g"]
			\\
			X_i \ar[r, "f_i"] & Z
		\end{tikzcd}
		\end{aligned}	
	\]
	\end{enumerate}

	\item 	If the monoidal product $\otimes$ is the cartesian product $\times$, we say that $\V$ is \emph{cartesian}.
	This implies that the monoidal unit $\1$ is terminal.
	If only this last condition holds, we say that $\V$ is \emph{semi}cartesian.

	\item Finally, if $\V$ is extensive, we say that the monoidal unit $\1$ is \emph{connected} if the representable functor 
	\[
		\V(\1, -) \colon \V \to \Set
	\]
	preserves all coproducts.
	If $\V$ is also semicartesian, then $\V(\1, \1) \cong \{*\}$, so for any set $X$ we have a canonical isomorphism 
	\begin{equation} \label{eq:V-prop4}
		\V\bigg(\1, \coprod_{x \in X} \1 \bigg) \cong \coprod_{x \in X} \{*\} \cong X.
	\end{equation}
	This last isomorphism is equivalent to the left adjoint of $\V(\1,-)$ (defined later in (\ref{eq:set-tensor})) being fully faithful.
	\end{enumerate}

	\begin{rem}
		Our Properties 2(i) and 2(ii) are respectively (e1) and (e$2'$) in the characterization of extensivity from \cite[\S 4.2]{centazzo97sheaf}.
		Another way of stating Property 2(ii) is that \emph{coproducts are universal}.
		Note that the definition we have used is sometimes called \emph{infinitary} extensive.
	\end{rem}

	\begin{rem}
		The above assumptions hold for the categories of sets, simplicial sets, topological spaces or categories, equipped with the cartesian monoidal product $\times$.
		They also hold for any locally cartesian closed category with disjoint coproducts and connected unit.

		An example where the above assumptions do \emph{not} hold is $(\Vect, \otimes_k, k)$.
	\end{rem}

	In \S \ref{sec:invgrcon}, we will only require that $\V$ has pullbacks, while in \S \ref{sec:grcon}, we will require all the above but with \emph{semi}cartesian instead of cartesian $\V$, and with only the isomorphism (\ref{eq:V-prop4}) of Property 4.

\subsection{Underlying categories and free $\V$-categories}
	\label{sec:underfree-prelim}

	The $2$-category of $\V$-categories, $\V$-functors and $\V$-natural transformations (defined in Appendix \ref{sec:appendix}) will be denoted $\Catv$.

	Throughout this paper, $\B$ will denote a $\V$-category with hom-objects $\B(b,c) \in \V$, while $B$ will denote an ordinary category with hom-sets $B(b,c)$.
	Let $\I$ denote the $\V$-category with a single object $*$ and
		\[
			\I(*,*) := \1.
		\]

	When $\V$ has coproducts, the representable functor $\V(\1, -)$ has a left adjoint $(-) \cdot \1 \colon \Set \to \V$ sending $X$ to
	\begin{equation} \label{eq:set-tensor}
		 X \cdot \1 := \coprod_{x \in X} \1.
	\end{equation}
	Further, if coproducts in $\V$ are preserved by $\otimes$, this adjunction between $\Set$ and $\V$ induces an adjunction:
	\begin{equation}
		\label{eq:CatVCat}
		\begin{tikzcd}[column sep = large]
			\Cat \ar[r, bend left = 20, shift left, "(-)_\V", ""{name = L}, start anchor = east, end anchor = west]
			& 
			\Catv \ar[l, bend left = 20, shift left, "(-)_0", ""{name = R}, start anchor = west, end anchor = east] \ar[from = L, to = R, symbol = \dashv]
		\end{tikzcd}
	\end{equation}	
	In detail, the \emph{underlying category} of $\B \in \Catv$ is the functor category
	\[
		\B_0 := \Catv(\I, \B)
	\]
	whose objects are $\V$-functors $b \colon \I \to \B$ and morphisms are $\V$-natural transformations $f \colon b \To c$.
	Equivalently, $\B_0$ is the category with the same objects as $\B$ and morphisms $f \colon \1 \to \B(b,c)$.

	The \emph{free $\V$-category} on  $B \in \Cat$ is the $\V$-category $B_\V$ with the same objects as $B$ and hom-objects
	\[
		B_\V(b,c) := B(b,c) \cdot \1 = \coprod_{f \in B(b,c)}\!\!\! \1.
	\]

	\begin{rem} \label{rem:BV0}
	Let $\iota$ and $\sigma$ denote the unit and counit of the adjunction $(-)_\V \dashv (-)_0$.
	Their components are 
	\begin{align*}
		\iota_B &\colon B \to (B_\V)_0, \\
		\sigma_\B &\colon (\B_0)_\V \to \B.
	\end{align*}	
	If the canonical isomorphism (\ref{eq:V-prop4}) from Property 4 holds, then $\iota_B$ is an isomorphism of categories
	\[
		B \cong (B_\V)_0.
	\]
	We elaborate on this further in Remark \ref{rem:identifyBBV0}.
	\end{rem}

	Some examples in the one-object case might be instructive.
	A one-object category may be identified with a monoid $M$, while a one-object $\V$-category may be identified with a monoid $\M$ in $\V$.

	\begin{example}
		When $\V = \Top$, the free topological monoid $M_\V$ on a monoid $M$ is the same monoid given the discrete topology, while the underlying category $\M_0$ of a topological monoid $\M$ is the same monoid forgetting its topology.
	
		In this case, $M = (M_\V)_0$, so $\iota_M$ is the identity.
		The map $\sigma_\M$ is also the identity on the underlying sets, but its domain has the discrete topology.
	\end{example}

	\begin{example}
		When $\V= \Vect$, the $k$-algebra $M_\V$ is the monoid-algebra $k[M]$, while the monoid $\M_0$ is the $k$-algebra $\M$ treated simply as a monoid (forgetting its $k$-linear structure).

		Unlike for $\Top$, in this case $M \neq k[M]$, but $\iota_M \colon M\hookrightarrow k[M]$ is the inclusion of $M$ as a basis.
		The map $\sigma_\M \colon k[\M] \to \M$ sends formal linear combinations of objects in $\M$ to their actual sum in $\M$.
	\end{example}

\subsection{Pseudofunctors, natural transformations, modifications}
	\label{sec:pseudoFTM}
	We will also be working with \emph{pseudofunctors} $F \colon B \to \Catv$ out of an ordinary category $B$.
	These are `functors' that are only associative and unital up to coherent isomorphism.
	More precisely, $F$ consists of the following data,

\begin{enumerate}
	\item for each $b \in B$, a $\V$-category $F_b$;
	\item for each $f \colon  b \to c$, a $\V$-functor $F_f \colon F_b \to F_b$;
	\item for each $b \in B$, a natural isomorphism $\xi(b) \colon F_{1_b} \cong 1_{F_b}$ (or simply $\xi$) with components:
	\[
		\xi_x \colon \1 \to F_b(F_{1_b} x, x)
	\]
	\item for each $b \xrightarrow{f} c \xrightarrow{g} d$ in $B$, a natural isomorphism $\theta(f,g) \colon F_{gf} \cong F_g F_f$ (or simply $\theta$) with components:
	\[
		\theta_x \colon \1 \to F_d(F_{g f} x , F_g F_f x)
	\]	
\end{enumerate}
satisfying the following relations:
	\begin{equation} \label{eq:pseudo-unit-left}
		\begin{aligned}
		\begin{tikzcd}
			& F_b \ar[dr, "F_f"]  &
			\\
			F_b \ar[ur, ""{name = D, above}, bend right = 20] \ar[rr, bend right = 10, "F_f"',""{name = L, above}] \ar[ur, bend left = 40, equals, ""{name = C, below}] \ar[from = C, to = D, Leftarrow, "\xi"] &  & F_d \ar[from = 1-2, to = L, Leftarrow, "\theta", pos=0.6, shorten <=0.8em, shorten >=0.5em]
		\end{tikzcd}
		\end{aligned}
		\quad = \quad
		\begin{aligned}
		\begin{tikzcd}
			& \phantom{F_d} & 
			\\
			F_b \ar[rr, bend left=20, "F_f", ""{name = U, below}] \ar[rr, bend right=20, "F_f"', ""{name = L, above}] & & F_d \ar[from = U, to = L, Leftarrow, "=", pos=0.6, shorten <=0.2em, shorten >=0.2em]
		\end{tikzcd}	
		\end{aligned}
	\end{equation}
	\begin{equation} \label{eq:pseudo-unit-right}
		\begin{aligned}
		\begin{tikzcd}
			& F_d \ar[dr,  ""{name = E, above}, bend right = 20] \ar[dr, bend left = 40, equals, ""{name = D, below}] \ar[from = D, to = E, Leftarrow, "\xi"]  &
			\\
			F_b \ar[ur, "F_f"] \ar[rr, bend right=10, "F_f"',""{name = L, above}]  &  & F_d \ar[from = 1-2, to = L, Leftarrow, "\theta"', pos=0.6, shorten <=0.8em, shorten >=0.5em, ]
		\end{tikzcd}
		\end{aligned}
		\quad = \quad
		\begin{aligned}
		\begin{tikzcd}
			& \phantom{F_d} & 
			\\
			F_b \ar[rr, bend left=20, "F_f", ""{name = U, below}] \ar[rr, bend right=20, "F_f"', ""{name = L, above}] & & F_d \ar[from = U, to = L, Leftarrow, "=", pos = 0.6, shorten <=0.2em, shorten >=0.2em]
		\end{tikzcd}	
		\end{aligned}
	\end{equation}
	\begin{equation} \label{eq:pseudo-assoc}
		\begin{aligned}
		\begin{tikzcd}
				&[-15pt] F_b \ar[r, "F_g"] \ar[drr,, ""{name = L, left}, bend right = 10] & F_d \ar[dr, "F_h"] \ar[to = L, Leftarrow, "\theta"', pos = 0.9] &[-15pt]
			\\
			F_b \ar[ur, "F_f"] \ar[rrr, "F_{hgf}"', bend right = 10, ""{name = B, left}, pos=0.4 ] & \phantom{F_b} & & F_e \ar[from  =1-3, to = B, Leftarrow, "\theta"', pos=0.7, shorten <=1.8em, shorten >=0.5em]
		\end{tikzcd}
		\end{aligned}
		\quad = \quad
		\begin{aligned}
		\begin{tikzcd}
				&[-15pt] F_b \ar[r, "F_g"] & F_d \ar[dr, "F_h"] &[-15pt]
			\\
			F_b \ar[ur, "F_f"] \ar[rrr, "F_{hgf}"', bend right = 10, ""{name = B,right}, pos = 0.6] \ar[urr, , ""{name = L, right}, bend right = 10] &  & & F_e \ar[from  =1-2, to= L, Leftarrow, "\theta", pos = 0.9] \ar[from = 1-2, to = B, Leftarrow, "\theta", pos=0.7, shorten <=1.8em, shorten >=0.5em]
		\end{tikzcd}
		\end{aligned}
	\end{equation}

	A \emph{pseudonatural transformation} (or simply a \emph{transformation}) $\alpha \colon F \To G$ between pseudofunctors consists of $1$-cells $\alpha_b \colon F_b \to G_b$ for each $b \in B$ and natural isomorphisms
	\begin{equation} \label{eq:trans}
		\begin{tikzcd}
			F_b \ar[r, "F_f"]\ar[d, "\alpha_b", swap] & F_b \ar[d, "\alpha_b"]\\
			G_b \ar[r, "G_f", swap] \ar[ur, Rightarrow,"\alpha_f", "\cong"', start anchor={north east}, end anchor={south west}, shorten <=0.7em, shorten >=0.7em]& G_b
		\end{tikzcd}
	\end{equation}
	for each $f \colon b \to c$ in $B$, satisfying further coherence rules given in \cite[\S 1.2]{leinsterbicats}.

	Finally, a  \emph{modification} $\Gamma\colon \alpha\Tto \beta$ between transformations consists of natural transformations $\Gamma_b \colon \alpha_b \To \beta_b$ for each $b \in B$ satisfying:
	\begin{equation} \label{eq:mod}	
	\begin{aligned}
		\begin{tikzcd}[column sep = huge, row sep = huge]
			F_b \ar[r, "F_f"]\ar[d, "\alpha_b" description, bend right = 40]  & F_b \ar[d, "\alpha_b" description, bend right = 40, ""{name = L}] \ar[d, "\beta_b" description, bend left = 40, ""{name = R, left}] \ar[from  = L, to = R, Rightarrow, "\Gamma_b"]\\
			G_b \ar[r, "G_f", swap] \ar[ur, Rightarrow,"\alpha_f" near start, start anchor={north east}, end anchor={south west}, shorten <=0.5em, shorten >=3.5em, "\cong"' near start]& G_b
		\end{tikzcd}
	\end{aligned}
	\quad = \quad
	\begin{aligned}
		\begin{tikzcd}[column sep = huge, row sep = huge]
			F_b \ar[r, "F_f"]\ar[d, "\alpha_b" description, bend right = 40, ""{name = L}] \ar[d, "\beta_b" description, bend left = 40, ""{name = R, left}] \ar[from = L, to = R, Rightarrow, "\Gamma_b"] & F_b \ar[d, "\beta_b" description, bend left = 40] \\
			G_b \ar[r, "G_f", swap] \ar[ur, Rightarrow,"\beta_f" near end, start anchor={north east}, end anchor={south west}, shorten <=3.5em, shorten >=0.5em, "\cong"' near end]& G_b
		\end{tikzcd}
	\end{aligned}	
	\end{equation}

	Let $\Fun(B, \Catv)$ denote the $2$-category of pseudofunctors, transformations and modifications from $B$ to $\Catv$.

\section{Opfibrations and the Inverse Grothendieck Construction}
	\label{sec:opfibinvgrcon} 
	In this section, we develop the theory of opfibrations in the enriched setting.	
	We define opfibrations over a base $\B$, and the 2-category $\OpFib(\B)$ that they form.
	The inverse Grothendieck construction is then a 2-functor from $\OpFib(\B)$ to the category of pseudofunctors $\Fun(\B_0, \Catv)$.

	Throughout we will assume that $\V$, and hence $\Catv$, has pullbacks.

\subsection{Opfibrations and opfibered functors}

\begin{defi}  \label{def:opcartesian}
	Let $p \colon \cE \to \B$ be a $\V$-functor.
	A map $\chi \colon \1 \to \cE(e, e')$ 	is \textbf{$p$-opcartesian} if the following square is a pullback in $\V$ for all $d \in \cE$:
	\begin{equation} \label{eq:opcartesian}
		\begin{tikzcd}[column sep = large, ]
			\cE(e', d) \ar[d, "p"'] \ar[r, "-\circ \chi"] & \cE(e,d) \ar[d, "p"]
			\\
			\B(pe', pd) \ar[r, "-\circ p\chi"] & \B(pe,pd)
		\end{tikzcd}
	\end{equation}
\end{defi}

\begin{defi}
	\label{def:opfib}
	An \textbf{opfibration} is a $\V$-functor $p\colon \cE \to \B$ along with, for every $e \in \cE$, $b \in \B$ and $f \colon \1 \to \B(pe, b)$, 
	an object $\lift{f}e \in \cE$ over $b$ and a $p$-opcartesian map $\chi(f,e) \colon \1 \to \cE(e, \lift{f}e)$ over $f$.

	The map $\chi(f,e)$ will be called a \textbf{chosen} $p$-opcartesian lift of $f$.
\end{defi}

We note a simple but important lemma:

\begin{lem} \label{lem:iso-to-chosen}
	Let $p \colon \cE \to \B$ be an opfibration.
	A map $\chi \colon \1 \to \cE(e,e')$ is $p$-opcartesian if and only if it is isomorphic to the chosen $p$-opcartesian lift $\chi(p\chi, e) \colon \1 \to \cE(e, \lift{(p\chi)} e)$ via a unique isomorphism 
	\begin{equation} \label{eq:eps-chi}
		\varepsilon_\chi \colon \1 \to \cE(\lift{(p\chi)}e, e')
	\end{equation}
	lying over $1_{e'}$.
\end{lem}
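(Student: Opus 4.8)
The plan is to prove both directions by exploiting the universal (pullback) property in Definition \ref{def:opcartesian}. For the ``if'' direction, suppose $\chi$ is isomorphic to the chosen lift $\chi(p\chi, e)$ via some $\varepsilon \colon \1 \to \cE(\lift{(p\chi)}e, e')$ lying over $1_{e'}$, with $\chi = \varepsilon \circ \chi(p\chi,e)$ (composition in $\cE_0$). I would check that $\chi$ is $p$-opcartesian by comparing the square \eqref{eq:opcartesian} for $\chi$ with that for $\chi(p\chi,e)$: precomposition with $\chi$ factors as precomposition with $\chi(p\chi,e)$ followed by precomposition with $\varepsilon$, and the latter is an isomorphism $\cE(e',d) \cong \cE(\lift{(p\chi)}e, d)$ covering the isomorphism $\B(pe',pd) \cong \B(p\lift{(p\chi)}e, pd)$ induced by $p\varepsilon$ (which is an iso since $\varepsilon$ is). Pasting an isomorphism-square onto a pullback square yields a pullback square, so \eqref{eq:opcartesian} holds for $\chi$ and $\chi$ is $p$-opcartesian.

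For the ``only if'' direction, assume $\chi \colon \1 \to \cE(e,e')$ is $p$-opcartesian; note $p\chi \colon \1 \to \B(pe, pe')$, so by Definition \ref{def:opfib} we have a chosen opcartesian lift $\chi(p\chi, e) \colon \1 \to \cE(e, \lift{(p\chi)}e)$ over $p\chi$, and $\lift{(p\chi)}e$ lies over $pe'$. To produce $\varepsilon_\chi$, I would apply the pullback property \eqref{eq:opcartesian} \emph{for $\chi$} with $d = \lift{(p\chi)}e$: the pair consisting of $\chi(p\chi,e) \in \cE(e, \lift{(p\chi)}e)$ and $1_{pe'} \in \B(pe', pe')$ — which agree in $\B(pe, pe')$ since both map to $p\chi$ — induces a unique $\varepsilon_\chi \colon \1 \to \cE(e', \lift{(p\chi)}e)$ lying over $1_{pe'}$, i.e.\ over $1_{e'}$ if we identify via the base, with $\varepsilon_\chi \circ \chi = \chi(p\chi,e)$. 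Symmetrically, applying the pullback property for the \emph{chosen} lift $\chi(p\chi,e)$ with $d = e'$ to the pair $(\chi, 1_{pe'})$ yields $\delta \colon \1 \to \cE(\lift{(p\chi)}e, e')$ over $1_{e'}$ with $\delta \circ \chi(p\chi, e) = \chi$. Then $\delta$ is the desired isomorphism \eqref{eq:eps-chi}: the composites $\delta\varepsilon_\chi$ and $\varepsilon_\chi\delta$ lie over $1_{e'}$ and $1_{\lift{(p\chi)}e}$ respectively and satisfy, e.g., $(\delta\varepsilon_\chi)\circ\chi = \delta\circ\chi(p\chi,e) = \chi = 1_{e'}\circ\chi$, so by the uniqueness clause of the pullback property (applied to $\chi$, which is opcartesian, with $d = e'$) we get $\delta\varepsilon_\chi = 1_{e'}$, and likewise $\varepsilon_\chi\delta = 1_{\lift{(p\chi)}e}$ using that $\chi(p\chi,e)$ is opcartesian. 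Uniqueness of $\varepsilon_\chi$ (equivalently of $\delta$) among isomorphisms over the identity is immediate from the uniqueness in the pullback property.

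The main obstacle is bookkeeping rather than conceptual: everything happens at the level of the underlying categories $\cE_0$ and $\B_0$, so I must be careful that ``$\varepsilon_\chi$ lies over $1_{e'}$'' is interpreted correctly (the pullback \eqref{eq:opcartesian} is a diagram in $\V$, and the relevant morphisms $\1 \to \cE(-,-)$ are elements of hom-objects, composed using the $\V$-category structure), and that the pasting-of-squares argument in the ``if'' direction genuinely uses naturality of $-\circ(-)$ in the first variable. I would also remark that this lemma is the enriched analogue of the standard fact that opcartesian lifts are unique up to unique vertical isomorphism, and that it is exactly what is needed to make the inverse Grothendieck construction $\Inv$ well-defined on objects independently of the choice of cleavage.
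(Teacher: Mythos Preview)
Your proposal is correct and is a detailed unpacking of the paper's one-line proof, which simply invokes ``the uniqueness (up to unique isomorphism) of the pullback in Definition \ref{def:opcartesian}.'' (A minor bookkeeping quirk: the map you call $\delta$ is what the paper names $\varepsilon_\chi$, as you yourself note.)
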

\begin{proof}
	This follows from the uniqueness (up to unique isomorphism) of the pullback in Definition \ref{def:opcartesian}.
\end{proof}

\begin{defi}
	An \textbf{opfibered functor} from $p\colon \cE \to \B$ to $q \colon \F \to \B$ is a functor $k \colon \cE \to \F$ that satisfies $qk = p$ and sends $p$-opcartesian maps to $q$-opcartesian maps.
\end{defi}

\begin{lem} \label{lem:opfibered-cocart}
	Let $p \colon \cE \to \B$ and $q \colon \F \to \B$ be opfibrations over $\B$, and let $k \colon \cE \to \F$ be such that $qk = p$.
	Then $k$ is opfibered if and only if it sends chosen $p$-opcartesian maps to (not necessarily chosen) $q$-opcartesian maps. 
\end{lem}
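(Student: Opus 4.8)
The plan is to prove both directions of the equivalence, with the forward direction being essentially trivial and the reverse direction requiring Lemma~\ref{lem:iso-to-chosen}.

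\medskip

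\noindent\textbf{Forward direction.} Suppose $k \colon \cE \to \F$ is opfibered. Then by definition $k$ sends \emph{every} $p$-opcartesian map to a $q$-opcartesian map; in particular it sends the chosen $p$-opcartesian maps $\chi(f,e)$ to $q$-opcartesian maps (which need not be among the chosen $q$-opcartesian lifts for $q$). This requires no argument beyond unwinding definitions.

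\medskip

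\noindent\textbf{Reverse direction.} Suppose $k$ sends chosen $p$-opcartesian maps to $q$-opcartesian maps, and let $\chi \colon \1 \to \cE(e,e')$ be an arbitrary $p$-opcartesian map. We must show $k\chi$ is $q$-opcartesian. By Lemma~\ref{lem:iso-to-chosen}, $\chi$ is isomorphic (over $1_{e'}$) to the chosen lift $\chi(p\chi, e) \colon \1 \to \cE(e, \lift{(p\chi)}e)$ via a unique isomorphism $\varepsilon_\chi \colon \1 \to \cE(\lift{(p\chi)}e, e')$ lying over $1_{e'}$, so that $\chi = \varepsilon_\chi \circ \chi(p\chi,e)$ in $\cE_0$. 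Applying $k$ (which is a $\V$-functor, hence commutes with composition of such global elements and with identities), we get $k\chi = (k\varepsilon_\chi) \circ (k\chi(p\chi,e))$ in $\F_0$, where $k\chi(p\chi,e)$ is $q$-opcartesian by hypothesis and $k\varepsilon_\chi$ is an isomorphism in $\F_0$ lying over $qk\varepsilon_\chi = p\varepsilon_\chi = 1_{k e'}$. It then remains to observe that composing a $q$-opcartesian map with an isomorphism over an identity yields a $q$-opcartesian map: this follows because in the defining pullback square~(\ref{eq:opcartesian}) for $q$-opcartesianness of $k\chi(p\chi,e)$, precomposition by $k\varepsilon_\chi$ and $q(k\varepsilon_\chi) = 1$ induces isomorphisms on the top and bottom rows, so the square for $k\chi$ is obtained from a pullback square by isomorphisms on two of its edges and is therefore itself a pullback.

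\medskip

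\noindent I expect the main (minor) obstacle to be the bookkeeping in the last step: verifying that the ``isomorphism composed with opcartesian is opcartesian'' fact holds in the enriched setting. Concretely, one compares the square~(\ref{eq:opcartesian}) for $k\chi$ against that for $k\chi(p\chi,e)$ using that $-\circ k\chi = (-\circ k\chi(p\chi,e))\circ(-\circ k\varepsilon_\chi)$ on hom-objects $\F(e',d) \to \F(ke,d)$, with $-\circ k\varepsilon_\chi \colon \F(e',d)\to \F(\lift{(p\chi)}e, d)$ an isomorphism (its inverse given by composition with the inverse of $k\varepsilon_\chi$, which exists since $k\varepsilon_\chi$ is an isomorphism in $\F_0$), and similarly on the base with $q(k\varepsilon_\chi) = 1$. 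Since pullbacks are stable under replacing edges by isomorphisms, this is routine. This is the only place the argument uses anything beyond functoriality of $k$ and Lemma~\ref{lem:iso-to-chosen}.
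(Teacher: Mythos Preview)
Your proof is correct and follows essentially the same route as the paper's: both directions use Lemma~\ref{lem:iso-to-chosen} to reduce an arbitrary $p$-opcartesian map to a chosen one up to isomorphism, apply $k$, and conclude. The only difference is that the paper leaves the final step (``isomorphic to $q$-opcartesian implies $q$-opcartesian'') implicit, whereas you spell it out via the pullback square; note one small slip in your write-up: $qk\varepsilon_\chi = p\varepsilon_\chi$ is $1_{pe'}$ in $\B_0$, not $1_{ke'}$.
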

\begin{proof}
	If $k$ is opfibered, it certainly sends chosen $p$-opcartesian maps to $q$-opcartesian ones.

	Conversely, suppose $k$ sends chosen $p$-opcartesian maps to $q$-opcartesian ones. 
	Lemma \ref{lem:iso-to-chosen} then shows that $k$ is opfibered: any $p$-opcartesian map $\chi \colon \1 \to \cE(e,e')$ is isomorphic to the chosen $p$-opcartesian map $\chi(p\chi, e)$.
	Since functors preserve isomorphisms, $k\chi$ is isomorphic to the $q$-opcartesian map $k \chi(p\chi, e)$, hence is also $q$-opcartesian.
\end{proof}

\begin{defi}
	Let $\OpFib(\B)$ denote the $2$-category whose objects are opfibrations over $\B$, morphisms are opfibered functors, and 2-morphisms are natural transformations over $\B$.
\end{defi}

\subsection{Properties of opcartesian maps}

	In this section, we record some results concerning opcartesian maps and the unique maps that their universal properties induce. 

	Let $p \colon \cE \to \B$ be an opfibration.
	By the universal property of $p$-opcartesian maps, a pair of maps $\varphi \colon \1 \to \cE(e,d)$  and $g \colon \1 \to \B(b, pd)$ such that $p\varphi = gf$ induces a unique $\widetilde{g}$:
	\[
		\begin{tikzcd}[sep = large]
			\1 \ar[drr, "\forall \,\varphi", bend left = 13] \ar[ddr, "\forall \, g"', bend right] \ar[dr, dashed, "\exists !\,\widetilde{g}" description]
			\\
			& \cE(\lift{f}e, d) \ar[r, "{-\circ \chi(f,e)}"] \ar[d, "p"']  \ar[dr,phantom, "\lrcorner", very near start] & \cE(e,d) \ar[d, "p"]
			\\
			& \B(b, pd) \ar[r, "-\circ f"] & \B(pe,pd)			
			\end{tikzcd}
	\]
	When $\V = \Set$, the preceding discussion yields the universal property:
	for every $\varphi \colon e \to d$ and $g \colon b \to pd$  such that $p\varphi = gf$, there exists a unique $\widetilde{g} \colon \lift{f} e \to d$ such that $p \widetilde{g} = g$ and $\widetilde{g}\chi(f,e) = \varphi$:
	\begin{equation}
		\label{eq:opcart-classical}
		\begin{tikzcd}[column sep = large, row sep =large]
			e \ar[r, "{\chi(f,e)}"] \ar[d, dotted] & \lift{f}e \ar[dr, dashed, "\exists !\, \widetilde{g}"] \ar[d, dotted] &
			\\
			pe \ar[r, "f"] \ar[drr, bend right = 10, "gf"'] & b \ar[dr, "\forall\, g"] & d \ar[d, dotted] \ar[from = 1-1, bend right = 10, crossing over, "\forall\, \varphi", near start]
			\\
			& & pd
		\end{tikzcd}
	\end{equation}
	Here, the dotted arrows represent $p$, and indicate which objects and arrows of $\cE$ lie over which objects and arrows of $\B$.

	\begin{lem}	\label{lem:g-tilde-opcart}
		 $\varphi$ is $p$-opcartesian if and only if $\widetilde{g}$ is.
	\end{lem}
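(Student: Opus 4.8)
The plan is to derive this from the pasting law for pullbacks in $\V$, using the factorization $\varphi = \widetilde{g} \circ \chi(f,e)$ and the fact that $\chi(f,e)$ is already known to be $p$-opcartesian.

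First I would fix an arbitrary object $d' \in \cE$ and assemble the rectangle in $\V$ obtained by applying the contravariant hom-functors $\cE(-,d')$ and $\B(-,pd')$, linked by $p$, to the composable pair $e \xrightarrow{\chi(f,e)} \lift{f}e \xrightarrow{\widetilde{g}} d$:
\[
\begin{tikzcd}
\cE(d, d') \ar[d, "p"'] \ar[r, "- \circ \widetilde{g}"] & \cE(\lift{f}e, d') \ar[d, "p"'] \ar[r, "- \circ \chi(f,e)"] & \cE(e, d') \ar[d, "p"]
\\
\B(pd, pd') \ar[r, "- \circ g"] & \B(b, pd') \ar[r, "- \circ f"] & \B(pe, pd')
\end{tikzcd}
\]
Here I have used $p\lift{f}e = b$ and $p\widetilde{g} = g$ to identify the middle vertical map and the lower-left map, and $p\chi(f,e) = f$ for the lower-right map; commutativity of each square then follows from functoriality of $p$. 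Using associativity of $\V$-categorical composition together with $\widetilde{g} \circ \chi(f,e) = \varphi$ and $gf = p\varphi$, the two horizontal composites are $-\circ\varphi$ and $-\circ p\varphi$, so the outer rectangle is exactly the square appearing in Definition \ref{def:opcartesian} for the map $\varphi$ at the object $d'$; similarly the left-hand square is that of Definition \ref{def:opcartesian} for $\widetilde{g}$ at $d'$.

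Now I would invoke, for every $d' \in \cE$, three observations: (i) the right-hand square is a pullback, since $\chi(f,e)$ is $p$-opcartesian; (ii) the left-hand square is a pullback for all $d'$ precisely when $\widetilde{g}$ is $p$-opcartesian; (iii) the outer rectangle is a pullback for all $d'$ precisely when $\varphi$ is $p$-opcartesian. Since the right-hand square is always a pullback, the pasting law for pullbacks gives that the outer rectangle is a pullback if and only if the left-hand square is; letting $d'$ vary and combining with (ii) and (iii) yields the equivalence ``$\varphi$ is $p$-opcartesian $\iff$ $\widetilde{g}$ is $p$-opcartesian''.

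There is no serious obstacle here; the only point requiring attention is the variance bookkeeping — in particular correctly identifying the \emph{right}-hand (rather than the left-hand) square as the one that is automatically a pullback, and confirming that the squares genuinely commute, which is where the defining identities $p\widetilde{g} = g$ and $\widetilde{g} \circ \chi(f,e) = \varphi$ of $\widetilde{g}$ get used.
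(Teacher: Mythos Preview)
Your proof is correct and is essentially identical to the paper's: both assemble the same two-square rectangle in $\V$ for an arbitrary $d'$, observe that the right-hand square is a pullback because $\chi(f,e)$ is $p$-opcartesian, and then invoke the pasting law for pullbacks to conclude that the outer rectangle is a pullback iff the left-hand square is. Your additional remarks on why the squares commute and on variance bookkeeping are accurate and only make the argument more explicit.
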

	\begin{proof}
		Let $d' \in \cE$.
		The various maps involved fit into the following diagram:
		\begin{equation} \label{eq:g-tilde-opcart}
			\begin{tikzcd}[sep = large]
				\cE(d, d') \ar[r, "-\circ \widetilde{g}"] \ar[rr, bend left, "-\circ \varphi"] \ar[d, "p"]
				&
				\cE(\lift{f}e, d') \ar[r, "-\circ {\chi(f,e)}"] \ar[d, "p"] \ar[dr,phantom, "\lrcorner", very near start]
				&
				\cE(e,d') \ar[d, "p"]
				\\
				\B(pd,pd') \ar[r, "-\circ g"]
				&
				\B(b, pd') \ar[r, "- \circ f"]
				&
				\B(pe, pd')
			\end{tikzcd}
		\end{equation}
		Since $\chi(f,e)$ is $p$-opcartesian, the square on the right is a pullback.
		By the pasting law for pullbacks, the outer square is a pullback ($\varphi$ is $p$-opcartesian) if and only if the square on the left is a pullback ($\widetilde{g}$ is $p$-opcartesian).
	\end{proof}

	\begin{lem} \label{lem:opcart-compose}
		A composite of $p$-opcartesian maps is $p$-opcartesian.
	\end{lem}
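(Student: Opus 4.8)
The plan is to reduce the claim about composites to the pasting law for pullbacks, exactly as in the proof of Lemma \ref{lem:g-tilde-opcart}. Suppose $\chi \colon \1 \to \cE(e, e')$ and $\chi' \colon \1 \to \cE(e', e'')$ are both $p$-opcartesian, and set $\psi := \chi' \circ \chi \colon \1 \to \cE(e, e'')$ (composition in the $\V$-category $\cE$; note $p\psi = p\chi' \circ p\chi$). First I would fix an arbitrary object $d \in \cE$ and write down the diagram whose two squares are the pullback squares witnessing that $\chi'$ and $\chi$ are $p$-opcartesian:
\begin{equation*}
	\begin{tikzcd}[sep = large]
		\cE(e'', d) \ar[r, "-\circ \chi'"] \ar[d, "p"']
		&
		\cE(e', d) \ar[r, "-\circ \chi"] \ar[d, "p"']
		&
		\cE(e, d) \ar[d, "p"]
		\\
		\B(pe'', pd) \ar[r, "-\circ p\chi'"]
		&
		\B(pe', pd) \ar[r, "-\circ p\chi"]
		&
		\B(pe, pd)
	\end{tikzcd}
\end{equation*}
The right-hand square is a pullback because $\chi$ is $p$-opcartesian, and the left-hand square is a pullback because $\chi'$ is $p$-opcartesian. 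By the pasting law for pullbacks, the outer rectangle is then a pullback.

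It remains only to identify the outer rectangle with the square in Definition \ref{def:opcartesian} for the map $\psi$. The top composite is $(-\circ \chi) \circ (-\circ \chi') = -\circ (\chi' \circ \chi) = -\circ \psi$ by associativity of composition in $\cE$, and likewise the bottom composite is $-\circ p\psi$ by associativity in $\B$ together with $p\psi = p\chi' \circ p\chi$; the vertical maps are the relevant components of $p$. Hence the outer rectangle is precisely the square \eqref{eq:opcartesian} for $\psi$ and $d$, and since $d$ was arbitrary, $\psi$ is $p$-opcartesian.

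I do not expect a genuine obstacle here; the only points requiring any care are (i) getting the direction of the horizontal maps right, since precomposition is contravariant, so composition with $\psi = \chi'\circ\chi$ corresponds to first precomposing with $\chi'$ and then with $\chi$, which is why the $\chi'$-square sits on the left and the $\chi$-square on the right; and (ii) confirming that "composite of opcartesian maps" here means composition in the underlying category $\cE_0$, i.e. of maps with domain $\1$, so that the associativity identities used are just those of the $\V$-category structure. Both are routine.
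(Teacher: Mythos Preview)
Your proof is correct and follows essentially the same approach as the paper: the paper's proof simply points to diagram (\ref{eq:g-tilde-opcart}) with $\widetilde{g}$ taken to be $p$-opcartesian and $\chi(f,e)$ replaced by an arbitrary $p$-opcartesian map, then invokes the pasting law for pullbacks. You have written out exactly this argument in full, including the careful identification of the horizontal composites with $-\circ\psi$ and $-\circ p\psi$.
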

	\begin{proof}
		This follows from arguments similar to the previous proof.
		The relevant diagram is (\ref{eq:g-tilde-opcart}), with $\tilde{g}$ taken to be $p$-opcartesian and $\chi(f,e)$ replaced by an arbitrary $p$-opcartesian map.
	\end{proof}

	\begin{lem} \label{lem:counit-iso}
		For each $e \in \cE$, the lift $\chi(1_{pe},e)$ is an isomorphism.
		Thus
		\[
			e \cong \lift{(1_{pe})} e.
		\]
	\end{lem}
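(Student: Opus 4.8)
The plan is to recognize $\chi(1_{pe}, e)$ as a $p$-opcartesian map lying over the identity $1_{pe}$, and to show directly that any such map must be an isomorphism by exhibiting its inverse via the universal property. Concretely, consider the defining pullback square for $\chi := \chi(1_{pe}, e) \colon \1 \to \cE(e, \lift{(1_{pe})}e)$ with respect to a test object $d \in \cE$: the square whose top map is $-\circ \chi \colon \cE(\lift{(1_{pe})}e, d) \to \cE(e, d)$ and whose bottom map is $-\circ 1_{pe} \colon \B(pe, pd) \to \B(pe, pd)$. The key observation is that $p\chi = 1_{pe}$, so the bottom map of this square is the identity on $\B(pe, pd)$. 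A pullback of a map along an identity is an isomorphism, so $-\circ \chi$ is an isomorphism for every $d \in \cE$. In particular, by the enriched Yoneda argument (taking representables into $\cE$), precomposition with $\chi$ being a natural isomorphism forces $\chi$ itself to be an isomorphism in $\cE_0$.

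In more detail, the steps I would carry out are: (1) Record that $\chi = \chi(1_{pe}, e)$ is by Definition \ref{def:opfib} a $p$-opcartesian lift of $1_{pe}$, so Diagram (\ref{eq:opcartesian}) is a pullback for all $d$. (2) Substitute $f = g = 1_{pe}$ and $e' = \lift{(1_{pe})}e$, observe the bottom arrow $-\circ p\chi = -\circ 1_{pe}$ is an identity, and conclude the top arrow $-\circ \chi \colon \cE(\lift{(1_{pe})}e, d) \to \cE(e,d)$ is an isomorphism in $\V$ for every $d$. (3) Apply this with $d = \lift{(1_{pe})}e$ to produce a map $\varepsilon \colon \1 \to \cE(\lift{(1_{pe})}e, e)$ whose image under $-\circ\chi$ is the identity $1_{\lift{(1_{pe})}e}$, i.e.\ $\varepsilon \circ \chi = 1_{\lift{(1_{pe})}e}$ after tracing through the isomorphism. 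Alternatively, and more cleanly, invoke Lemma \ref{lem:iso-to-chosen}: since $\chi(p\chi, e) = \chi(1_{pe}, e) = \chi$ is literally the chosen lift of its own image $p\chi = 1_{pe}$, the unit isomorphism $\varepsilon_\chi$ of that lemma is the identity, but the statement we actually need is that $\chi$ is iso, which follows because $-\circ\chi$ is a natural iso of representable $\V$-functors $\cE(\lift{(1_{pe})}e, -) \Rightarrow \cE(e, -)$ on the underlying categories.

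(4) Finish by invoking the (enriched, or merely $\V$-to-$\Set$) Yoneda lemma: a $\V$-natural family of isomorphisms $\cE(e', d) \xrightarrow{\ \sim\ } \cE(e, d)$ induced by precomposition with a single map $\chi \colon \1 \to \cE(e, e')$ exhibits $\chi$ as an isomorphism in the underlying category $\cE_0$; the two-sided inverse is obtained as in step (3) by pulling back $1_{e'}$ and $1_e$ through the relevant components. Hence $e \cong \lift{(1_{pe})}e$ with the isomorphism given by $\chi(1_{pe}, e)$ itself.

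The only mildly delicate point — the ``main obstacle,'' though it is minor — is making precise that ``$-\circ\chi$ is an iso for all $d$'' upgrades to ``$\chi$ is an iso.'' This is a routine application of enriched Yoneda, but one must be slightly careful that the family of isomorphisms $\{-\circ\chi\colon \cE(e',d)\to\cE(e,d)\}_d$ is natural in $d$ (it is, since precomposition is functorial), so that the inverse maps assemble into a $\V$-natural transformation $\cE(e,-)\Rightarrow\cE(e',-)$, which by Yoneda is represented by a map $\varepsilon\colon \1\to\cE(e',e)$ inverse to $\chi$. Everything else is bookkeeping with the pullback square.
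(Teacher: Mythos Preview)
Your argument is correct, modulo a small slip in step (3): to obtain $\varepsilon \colon \1 \to \cE(\lift{(1_{pe})}e, e)$ with $\varepsilon \circ \chi = 1_e$ you must take $d = e$, not $d = \lift{(1_{pe})}e$. The case $d = \lift{(1_{pe})}e$ is then what lets you conclude $\chi \circ \varepsilon = 1_{\lift{(1_{pe})}e}$ from injectivity of $-\circ\chi$. Your step (4) handles this correctly in any case.

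The paper takes a different and shorter route, leaning on the machinery already set up. It observes that $1_e \colon \1 \to \cE(e,e)$ is itself $p$-opcartesian (trivially: both horizontal maps in the square of Definition~\ref{def:opcartesian} are identities), and then applies Lemma~\ref{lem:iso-to-chosen} to $1_e$ rather than to $\chi$. This yields the unique isomorphism $\varepsilon_{(1_e)}$ satisfying $\varepsilon_{(1_e)} \circ \chi(1_{pe},e) = 1_e$; since $\varepsilon_{(1_e)}$ is invertible, so is $\chi(1_{pe},e)$. Your attempted invocation of Lemma~\ref{lem:iso-to-chosen} was uninformative precisely because you applied it to $\chi$ itself; the content comes from comparing $\chi$ against a \emph{different} opcartesian lift of $1_{pe}$, namely $1_e$. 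Your pullback-plus-Yoneda argument is self-contained and avoids that lemma entirely, which is perfectly fine; the paper's route is a bit slicker given the prior lemma, and has the side benefit of naming $\varepsilon_{(1_e)}$ explicitly, since it is reused later as the component $\xi_e$ of the pseudofunctor $\cE_\bullet$.
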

	\begin{proof}
		It is easy to see that $1_e$ is $p$-opcartesian.
		By Lemma \ref{lem:iso-to-chosen}, there is a unique $\varepsilon_{(1_e)}$ which is a left inverse to $\chi(1_{pe},e)$:
		\begin{equation} \label{eq:eps-e}
			\varepsilon_{(1_e)} \circ \chi(1_{pe},e) = 1_e.
		\end{equation}
		Since $\varepsilon_{(1_e)}$ is an isomorphism, it is a right inverse as well, so $\chi(1_{pe},e)$ is an isomorphism.
	\end{proof}

\subsection{Fibers and transport}
\label{sec:fibers}
	In this section, we define and study the fibers of an opfibration, and show that arrows in the base $\B_0$ induce transport functors between fibers. 
	We begin with a more general definition of fibers of any functor.
	\begin{defi}
		Let $p\colon  \cE \to \B$ be a $\V$-functor. For each $b \in \B$, treated as a functor $b \colon \I \to \B$, the \textbf{fiber} of $p$ over $b$ is the category $\cE_b$ given by the pullback:
		\[
			\begin{tikzcd}
				\cE_b \ar[dr,phantom, "\lrcorner", very near start] \ar[r, hookrightarrow] \ar[d] & \cE \ar[d, "p"]
				\\
				\I \ar[r, "b"'] & \B
			\end{tikzcd}
		\]
	\end{defi}	

	The objects of $\cE_b$ are $\left\{e \in \cE \, | \, pe = b \right\}$, 
	while the morphisms are given by the pullback:
	\begin{equation} 
		\label{eq:fiber-homs}
		\begin{tikzcd}
			\cE_b(e,e') \ar[r] \ar[d] \arrow[dr, phantom, "\lrcorner", very near start] & \cE(e,e') \ar[d 	, "p"]
			\\
			\1 \ar[r, "1_b"'] & \B(b,b)
		\end{tikzcd}.
	\end{equation}

	\begin{rem}
	We may think of $\cE_b$ as the subcategory of $\cE$ consisting of objects in the pre-image of $b$ and morphisms in the pre-image of $1_b$.
	\end{rem}

	\begin{prop} \label{prop:transport}
		Let $p \colon \cE \to \B$ be a opfibration.
		For every $f \in \B_0(b,b')$, the assignment $e \mapsto \lift{f}e$ extends to a functor
		\[
			\lift{f} \colon \cE_b \to \cE_{b'},
		\]
		called the \textup{\textbf{transport along $f$}}.
	\end{prop}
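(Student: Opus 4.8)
The plan is to define $\lift{f}$ on morphisms via the universal property of the chosen opcartesian lifts, and then check functoriality using the \emph{uniqueness} part of that property (the object part is already given, and $\lift{f}e$ lies over $b'$ by Definition~\ref{def:opfib}, hence belongs to $\cE_{b'}$). Recall (see (\ref{eq:fiber-homs}) and the remark following it) that a morphism of the fiber $\cE_b$ from $e$ to $e_1$ amounts to a map $\phi \colon \1 \to \cE(e,e_1)$ with $p\phi = 1_b$, and that composition in $\cE_b$ is the restriction of composition in $\cE$. Given such a $\phi$, I would form the composite $\chi(f,e_1) \circ \phi \colon \1 \to \cE(e, \lift{f}e_1)$, which lies over $p\chi(f,e_1) \circ p\phi = f \circ 1_b = 1_{b'} \circ f$. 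Feeding $\varphi = \chi(f,e_1)\circ\phi$, $d = \lift{f}e_1$ and $g = 1_{b'}$ into the universal property of the $p$-opcartesian map $\chi(f,e)$ then yields a unique map, which I call $\lift{f}\phi \colon \1 \to \cE(\lift{f}e,\lift{f}e_1)$, characterized by
\[
	p(\lift{f}\phi) = 1_{b'} \qquad\text{and}\qquad (\lift{f}\phi)\circ\chi(f,e) = \chi(f,e_1)\circ\phi .
\]
The first equation says exactly that $\lift{f}\phi$ is a morphism of $\cE_{b'}$ from $\lift{f}e$ to $\lift{f}e_1$, so this assigns a fiber morphism to each $\phi$.

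To prove functoriality I would, in each case, exhibit a candidate map satisfying the two characterizing equations above and then invoke uniqueness. For identities, $1_{\lift{f}e}$ lies over $1_{b'}$ and satisfies $1_{\lift{f}e}\circ\chi(f,e) = \chi(f,e) = \chi(f,e)\circ 1_e$, so $\lift{f}(1_e) = 1_{\lift{f}e}$. For a composable pair $e \xrightarrow{\phi} e_1 \xrightarrow{\psi} e_2$ in $\cE_b$, the map $\lift{f}\psi\circ\lift{f}\phi$ lies over $1_{b'}\circ 1_{b'} = 1_{b'}$, and, using associativity of composition in $\cE$ together with the defining equations for $\lift{f}\phi$ and $\lift{f}\psi$,
\[
	\big(\lift{f}\psi\circ\lift{f}\phi\big)\circ\chi(f,e) = \lift{f}\psi\circ\big(\chi(f,e_1)\circ\phi\big) = \big(\lift{f}\psi\circ\chi(f,e_1)\big)\circ\phi = \chi(f,e_2)\circ(\psi\circ\phi),
\]
which is precisely the equation characterizing $\lift{f}(\psi\circ\phi)$; hence $\lift{f}(\psi\circ\phi) = \lift{f}\psi\circ\lift{f}\phi$.

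I do not expect a genuine obstacle here — the construction is forced once the universal property is in place — so the only real work is careful bookkeeping. Two points deserve attention. First, one must consistently pass between ``morphisms of the fiber'' and maps $\1 \to \cE(-,-)$ lying over an identity, and remember that all composites above are taken in $\cE$ (equivalently in the fiber, whose composition is inherited). Second, the auxiliary $1$-cell supplied to the universal property must be the identity $1_{b'}$ rather than $f$: it is this choice that forces $\lift{f}\phi$ to land over $b'$, i.e.\ in $\cE_{b'}$, rather than merely in $\cE$. When $\V = \Set$ this recovers the classical transport functor extracted from diagram (\ref{eq:opcart-classical}).
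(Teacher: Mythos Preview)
Your argument is correct at the level of underlying categories, but it does not construct a $\V$-functor.  The fiber $\cE_b$ is a $\V$-category, and a functor $\lift{f}\colon \cE_b \to \cE_{b'}$ in this context means a $\V$-functor: for each pair $e,e'$ you must supply a $\V$-morphism
\[
  (\lift{f})_{e,e'} \colon \cE_b(e,e') \longrightarrow \cE_{b'}(\lift{f}e,\lift{f}e')
\]
between hom-\emph{objects}.  What you have done instead is to take individual global elements $\phi\colon \1 \to \cE_b(e,e_1)$ and produce global elements $\lift{f}\phi\colon \1 \to \cE_{b'}(\lift{f}e,\lift{f}e_1)$.  That yields a functor $(\cE_b)_0 \to (\cE_{b'})_0$ on underlying categories, but for general $\V$ the hom-object $\cE_b(e,e_1)$ is not determined by its $\1$-points, so this does not suffice.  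Your appeal to the universal property in the form of (\ref{eq:opcart-classical}) is exactly the $\V=\Set$ version, as the paper itself flags.

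The fix is essentially what the paper does: work with the whole hom-object rather than an element of it.  One composes $\cE_b(e,e') \hookrightarrow \cE(e,e') \xrightarrow{\chi(f,e')\circ -} \cE(e,\lift{f}e')$, observes that the image lies over $f = 1_{b'}\circ f$ via $p$, and then feeds $\cE_b(e,e')$ (in place of your $\1$) into the pullback square witnessing $p$-opcartesianness of $\chi(f,e)$ together with the pullback defining $\cE_{b'}(\lift{f}e,\lift{f}e')$.  Uniqueness of the induced map then gives functoriality, exactly as in your argument but now at the level of $\V$-morphisms.  So your strategy is right; it just needs to be executed with hom-objects rather than their global elements.
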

	\begin{proof}
		On morphisms, take $(\lift{f})_{e,e'} \colon \cE_b(e,e') \to \cE_{b'}(\lift{f}e,\lift{f}e')$ to be the unique map induced by the commuting diagram
		\[
			\begin{tikzcd}[sep = large]
				\cE_b(e,e') \ar[r, hookrightarrow] \ar[d] \arrow[dr, phantom, "\lrcorner", very near start]  & \cE(e,e') \ar[d, "p"] \ar[r, "\chi{(f, e')}\circ -"] & \cE(e, \lift{f}e') \ar[d, "p"]
				\\
				\1 \ar[r, "1_b"] & \B(b,b) \ar[r, "f \circ -"] & \B(b, b')
			\end{tikzcd}
		\]
		and the universal property of the composite pullback:
		\[
			\begin{tikzcd}[sep = large]
				\cE_{b'}(\lift{f}e,\lift{f}e') \ar[r, hookrightarrow] \ar[d] \arrow[dr, phantom, "\lrcorner", very near start]  & \cE(\lift{f}e,\lift{f}e') \ar[d, "p"] \ar[r, "-\circ \chi{(f, e)}"] \arrow[dr, phantom, "\lrcorner", very near start]  & \cE(e, \lift{f}e') \ar[d, "p"]
				\\
				\1 \ar[r, "1_{b'}"] & \B(b',b') \ar[r, "-\circ f"] & \B(b, b')
			\end{tikzcd}
		\]		
		Functoriality of $\lift{f}$ follows from the uniqueness of each $(\lift{f})_{e,e'}$.
		We leave it to the reader to check the details.
	\end{proof}

	We may attempt to define a functor
	\begin{align}
		\cE_\bullet \colon  \B_0 &\xrightarrow{?} \Catv  \nonumber \\ 
				 b &\mapsto \cE_b \label{eq:Fpseudo} \\ 
				 (b \xrightarrow{f} b') &\mapsto (\cE_{b} \xrightarrow{\lift{f}} \cE_{b'}).  \nonumber
	\end{align}
	Unfortunately, $\cE_\bullet$ fails to be a functor, as it only preserves identities and composites \emph{up to isomorphism}, i.e.\ $\cE_\bullet$ is a \emph{pseudo}functor.
	This is the subject of the next subsection.

\subsection{The Inverse Grothendieck Construction $I$}\label{sec:invgrcon}

	\begin{prop} 
	The map $\cE_\bullet \colon \B_0 \to \Catv$ can be given the structure of a pseudofunctor.
	\end{prop}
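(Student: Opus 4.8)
# Proof Proposal

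The plan is to produce all the structure of a pseudofunctor for $\cE_\bullet$, namely the structure isomorphisms $\xi(b)\colon \lift{(1_b)} \cong 1_{\cE_b}$ and $\theta(f,g)\colon \lift{(gf)} \cong \lift{g}\lift{f}$, and then verify the three coherence axioms \eqref{eq:pseudo-unit-left}, \eqref{eq:pseudo-unit-right}, \eqref{eq:pseudo-assoc}. The objects $\cE_b$ and the transport functors $\lift{f}$ are already in hand from Proposition \ref{prop:transport}, so only the coherence data and axioms remain.

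First I would construct $\xi(b)$. By Lemma \ref{lem:counit-iso}, for each $e \in \cE_b$ the chosen lift $\chi(1_b, e)\colon \1 \to \cE(e, \lift{(1_b)}e)$ is an isomorphism; moreover it lies over $1_b$, so it in fact lives in the fiber $\cE_b$ via the pullback \eqref{eq:fiber-homs}, giving an isomorphism $e \cong \lift{(1_b)}e$ in $\cE_b$. The content to check is that these isomorphisms are natural in $e$, i.e.\ that they assemble into a natural isomorphism $\lift{(1_b)} \To 1_{\cE_b}$; this follows from the defining commuting square of $(\lift{(1_b)})_{e,e'}$ in the proof of Proposition \ref{prop:transport} together with the uniqueness clause in the universal property of opcartesian maps. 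I would take $\xi_x$ to be the component of the \emph{inverse} isomorphism (or its mate), matching the direction $\1 \to F_b(F_{1_b}x, x)$ demanded in \S\ref{sec:pseudoFTM}.

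Next I would construct $\theta(f,g)$ for $b \xrightarrow{f} c \xrightarrow{g} d$. For $e \in \cE_b$, Lemma \ref{lem:opcart-compose} says the composite $\chi(g, \lift{f}e)\circ \chi(f,e)\colon \1 \to \cE(e, \lift{g}\lift{f}e)$ is $p$-opcartesian, and it lies over $gf$. By Lemma \ref{lem:iso-to-chosen} it is therefore isomorphic, via a unique isomorphism $\varepsilon$ over $1_{\lift{g}\lift{f}e}$, to the chosen lift $\chi(gf, e)\colon \1 \to \cE(e, \lift{(gf)}e)$; this $\varepsilon$ lives in the fiber $\cE_d$ and is my $\theta_e$. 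Naturality in $e$ again reduces to uniqueness in the universal property, combined with the functoriality diagrams for $\lift{f}$, $\lift{g}$ and $\lift{(gf)}$ from Proposition \ref{prop:transport}.

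Finally I would check the three axioms. Each axiom is an equation between two isomorphisms in a fiber $\cE_\bullet$, and in every case both sides are isomorphisms over an identity that are compatible with the same chosen opcartesian lift, so they must coincide by the uniqueness part of Lemma \ref{lem:iso-to-chosen} (equivalently, by uniqueness of factorizations through a pullback). Concretely: the two unit axioms compare $\theta(1_b,f)$ composed with $\xi(b)$ (resp.\ $\theta(f,1_c)$ with $\xi(c)$) against the identity, using that $\chi(1,-)$ is an iso; the associativity axiom compares the two ways of reassociating $\chi(h,-)\circ\chi(g,-)\circ\chi(f,-)$, both of which are the unique comparison to $\chi(hgf,-)$. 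The main obstacle I anticipate is purely bookkeeping: keeping straight the directions of all these isomorphisms (each $\chi$, each $\varepsilon_\chi$, each $\xi_x$, $\theta_x$ is a map \emph{into} a hom-object $\1 \to \cE(-,-)$, not a "plain" arrow), and phrasing the naturality and coherence checks so that each reduces cleanly to a uniqueness statement rather than an explicit diagram chase. I would likely isolate one lemma — "any two $p$-opcartesian maps out of $e$ lying over the same $g\colon \1 \to \B(pe,b)$ differ by a unique iso over $1_b$" — and invoke it repeatedly, so that the verification of the axioms becomes a short sequence of "both sides satisfy the same universal property" remarks, leaving the routine diagram-pasting to the reader as the paper does elsewhere.
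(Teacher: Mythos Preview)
Your proposal is correct and follows essentially the same route as the paper: the paper takes $\xi_e := \varepsilon_{(1_e)}$ from Lemma~\ref{lem:counit-iso} (which is exactly your ``inverse of $\chi(1_b,e)$''), takes $\theta_e := \varepsilon_{\chi(g,\lift{f}e)\chi(f,e)}$ from Lemma~\ref{lem:iso-to-chosen} after invoking Lemma~\ref{lem:opcart-compose}, and then dispatches the coherence axioms by appealing to the uniqueness of these $\varepsilon$'s, just as you propose. Your write-up is more explicit about naturality and about the bookkeeping of directions, but the underlying argument is identical.
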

	\begin{proof}
		We need to supply natural isomorphisms $\xi$ and $\theta$ as per \S \ref{sec:pseudoFTM}.

		For each $b \in \B_0$ and $e \in \cE_b$, take $\xi_e$ to be the isomorphism
		$\varepsilon_{(1_e)}$ from Lemma \ref{lem:counit-iso}.
		For $f \in \B_0(b,c)$ and $g \in \B_0(c,d)$, we need an isomorphism
		\[
			\theta_e \colon \1 \to \cE_{d}( \lift{gf}e, \lift{g}\lift{f} e).
		\]
		By Lemma \ref{lem:opcart-compose}, the composite
		$
					\chi(g,\lift{f}e) \, \chi(f,e)
		$
		from  $e$ to $\lift{g}\lift{f} e$ is $p$-opcartesian.
		We may thus take $\theta_e$ to be the unique isomorphism
		$
				\varepsilon_{\chi(g,\lift{f}e) \chi(f,e)}
		$
		from Lemma \ref{lem:iso-to-chosen} between $\chi(gf,e)$ and $\chi(g, \lift{f}e)\, \chi(f,e)$.

		The uniqueness of these components may be used to show that they satisfy equations (\ref{eq:pseudo-unit-left}, \ref{eq:pseudo-unit-right}, \ref{eq:pseudo-assoc}), so that we do indeed obtain a pseudofunctor.
	\end{proof}

	\begin{prop}
		The inverse Grothendieck construction that sends an opfibration $p \colon \cE \to \B$ to the pseudofunctor $\cE_\bullet$ extends to a $2$-functor $$\Inv \colon \OpFib(\B) \to \Fun(\B_0,\Catv).$$
	\end{prop}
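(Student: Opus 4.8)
The plan is to define $\Inv$ on $1$- and $2$-cells so that, exactly as for its action on objects, each required identity is forced by the uniqueness clause of Lemma~\ref{lem:iso-to-chosen} together with the universal property of opcartesian maps.

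First I would define $\Inv$ on an opfibered functor $k\colon(\cE,p)\to(\F,q)$. Since $qk=p$, the functor $k$ restricts on fibers to $\V$-functors $k_b\colon\cE_b\to\F_b$ for every $b\in\B_0$; these are the $1$-cell components of a pseudonatural transformation $\Inv(k)\colon\cE_\bullet\To\F_\bullet$. For $f\in\B_0(b,c)$ and $e\in\cE_b$ the map $k\,\chi(f,e)\colon\1\to\F(k_b e,\,k_c\lift{f}e)$ is $q$-opcartesian, because $k$ is opfibered, and lies over $f$; so by Lemma~\ref{lem:iso-to-chosen} there is a unique isomorphism over an identity
\[
	\varepsilon_{k\chi(f,e)}\colon\1\to\F\big(\lift{f}(k_b e),\,k_c\lift{f}e\big),
\]
and I take the naturality isomorphism $\Inv(k)_f\colon\lift{f}\circ k_b\To k_c\circ\lift{f}$ of~(\ref{eq:trans}) to have these as its components. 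Naturality of $\Inv(k)_f$ in $e$, and the two coherence laws relating $\Inv(k)_f$ to the structure isomorphisms $\xi$ and $\theta$ of $\cE_\bullet$ and $\F_\bullet$, then follow because in each case the two sides are maps whose common source is the target of a chosen opcartesian lift and which agree after precomposition with that lift, hence agree by uniqueness.

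Next, a $2$-cell of $\OpFib(\B)$ is a $\V$-natural transformation $\gamma\colon k\To k'$ with $q\gamma=1_p$, so every component $\gamma_e$ lies over $1_{pe}$ and $\gamma$ restricts to natural transformations $\gamma_b\colon k_b\To k'_b$ on fibers; I set $\Inv(\gamma):=(\gamma_b)_{b\in\B_0}$. That this defines a modification $\Inv(k)\Tto\Inv(k')$ amounts to the single axiom~(\ref{eq:mod}); both composites there are determined by how they precompose with $\chi(f,e)$, and a short diagram chase using naturality of $\gamma$ and the defining equations of the isomorphisms $\varepsilon$ shows they coincide. Finally, $\Inv$ sends identity opfibered functors to identity transformations and identity $2$-cells to identity modifications essentially by construction (the relevant $\varepsilon$'s are identities, as in Lemma~\ref{lem:counit-iso}); it preserves vertical composition of $2$-cells because restriction to fibers is functorial; and it preserves composition of $1$-cells and whiskerings because in each case the two isomorphisms to be compared share their source and target and lie over an identity, hence are equal by Lemma~\ref{lem:iso-to-chosen}. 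Together these give preservation of horizontal composition, so $\Inv$ is a $2$-functor.

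I expect the genuine work to lie in the coherence laws for $\Inv(k)$: one must compare the chosen opcartesian lift of a composite $gf$ with the composite lift $\chi(g,\lift{f}e)\,\chi(f,e)$ (opcartesian by Lemma~\ref{lem:opcart-compose}) out of which $\theta$ was built, and track how $k$ intertwines these with the isomorphisms $\varepsilon_{k\chi(-,-)}$. Conceptually nothing here is difficult---every equation is pinned down by uniqueness---but organizing this bookkeeping is the main obstacle.
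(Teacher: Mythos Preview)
Your proposal is correct and follows essentially the same approach as the paper: the components $k_b$ and the isomorphisms $\varepsilon_{k\chi(f,e)}$ from Lemma~\ref{lem:iso-to-chosen} are exactly what the paper uses, and the $2$-cell $\gamma$ is likewise restricted to fibers to obtain the modification. In fact you go further than the paper, which simply leaves the coherence and $2$-functoriality checks to the reader; your uniqueness arguments for these are the intended ones.
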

	\begin{proof}
		We have seen above what $\Inv$ does to opfibrations i.e.\ the 0-cells of $\OpFib(\B)$. We need to define what $\Inv$ does to 1-cells and 2-cells.
		Let $p \colon \cE \to \B$ and $q \colon \F \to \B$ be opfibrations over $\B$, and let $\cE_\bullet, \F_\bullet \colon \B_0 \to \Catv$ be the corresponding pseudofunctors.	

		Suppose we have an opfibered functor $k \colon \cE \to \F$.
		Pulling this back along each $b$ yields functors $k_b \colon \cE_b \to \F_b$ sending $e$ to $ke$. 
		We seek a natural isomorphism of the form
		\begin{equation} \label{eq:alpha-f}
			\begin{tikzcd}
				\cE_b \ar[d, "k_b"'] \ar[r, "\lift{f}"] & 	\cE_{b'} \ar[d, "k_{b'}"] 		
				\\
				\F_b \ar[r, "\lift{f}"']
				\ar[ur, Rightarrow, shorten >=1em, shorten <=1em, "\cong"', "\alpha_f"]	& \F_{b'}
			\end{tikzcd}		
		\end{equation}
		with components $(\alpha_f)_e \colon \1 \to \F_{b'}(\lift{f} \, k e, k \, \lift{f}e )$.
		Since $k$ is opfibered, the map $k \chi(f,e)$ is $q$-opcartesian.
		By Lemma \ref{lem:iso-to-chosen}, we may take $(\alpha_f)_e$ to be the unique isomorphism $\varepsilon_{k \chi(f,e)}$ between $\chi(f, ke)$ and $k \chi(f,e)$.

		The functors $k_b$ thus form the 1-components of a pseudonatural transformation which we denote $\kappa \colon \cE_\bullet \To \F_\bullet$, while the 2-components are given by the natural isomorphism above.

		Given another opfibered functor $h \colon \cE \to \F$ which induces $\lambda \colon \cE_\bullet \To \F_\bullet$, and a $\V$-natural transformation $\gamma \colon h \To k$ over $\B$, we may once again pull all these back along $b$ to obtain a $\V$-natural transformation
		\[
			\begin{tikzcd}
				\cE_b  \ar[r, bend left, start anchor = north east, end anchor = north west, "h_b", ""{name = U, below}] \ar[r, bend right, start anchor = south east, end anchor = south west, "k_b"', ""{name=D, above}] &  \F_b \ar[from = U, to = D, Rightarrow, "\gamma_b"]
			\end{tikzcd}
		\]
		The 2-cells $\gamma_b$ then form the data of a modification $\Gamma \colon \lambda \Rrightarrow \kappa$.

		We leave it to the reader to check that the various coherence conditions are satisfied, and that this indeed yields a $2$-functor.
	\end{proof}

	Note that while the inverse Grothendieck construction takes opfibrations over an arbitrary enriched $\V$-category $\B$, it only returns pseudofunctors from an unenriched $\B_0$.
	It is thus generally not possible to recover an opfibration $p \colon \cE \to \B$ over an arbitrary base $\B$ from its corresponding pseudofunctor $\cE_\bullet \colon \B_0 \to \Catv$.
	The next section describes the best we can do.

\section{The Grothendieck Construction}\label{sec:grcon}
We now describe an opfibration $p \colon GrF \to B_\V$ associated to a pseudofunctor $F \colon B \to \Catv$. 

\subsection{Assumptions}\label{sec:assumptions}
For what follows, we require the following assumptions described in \S \ref{sec:Vprop}:
\begin{enumerate}
	\item $\V$ has coproducts, and $\otimes$ preserves coproducts in both variables;
	\item $\V$ has pullbacks and is extensive (pullbacks preserve coproduct injections and decompositions);
	\item $\V$ is semi-cartesian ($\1$ is terminal);		
	\item For $X$ a set, we have a canonical isomorphism
	\[
		\V\bigg(\1, \coprod_{x \in X} \1 \bigg) \cong X,
	\]
	so that $B \cong (B_\V)_0$ (this holds if $\1$ is connected).
\end{enumerate}

	We briefly sketch where these properties will be used:~Coproducts are required for the formation of $B_\V$ and $GrF$, and we require $\otimes$ to commute with coproducts to define composition in these categories.
	While $\1$ need not be terminal to obtain $GrF$, we do need it to obtain a functor $p \colon GrF \to B_\V$.
	Pullbacks are needed in the very definition of an opfibration.
	Finally, extensivity and the last condition are required in order for $p$ to be an opfibration (see Remark \ref{rem:assumption4used}).

\begin{rem} \label{rem:identifyBBV0}
Property 4 implies that the unit of the adjunction (\ref{eq:CatVCat}) is an isomorphism of categories: 
\[
	B \cong (B_\V)_0.
\]
We simplify matters by assuming that this isomorphism is in fact \emph{equality}, which we justify in the following manner:

Recall that elements of $(B_\V)_0$ are $\V$-maps
\[
	\1 \to B_\V(b,c) = \coprod_{f \in B(b,c)} \1.
\]
The set of such maps contains the inclusions $\1_g \hookrightarrow B_\V(b,c)$, where $\1_g$ denotes the copy of $\1$ corresponding to $g \in B(b,c)$. 
Assumption 4 then says that these inclusions account for \emph{all} $\V$-maps $\1 \to B_\V(b,c)$.
By abuse of notation, we may \emph{identify} elements $g \in B(b,c)$ with maps
\[
	\1 = \1_g \hookrightarrow B_\V(b,c),
\]
which we also call $g$. 
Under this identification, we then have $B = (B_\V)_0$.
\end{rem}

\subsection{The category $Gr F$}

\begin{defi}\label{defi:gc}
% Let $\B=\widetilde{\B}_\Cat$ be a the image of an ordinary category $\widetilde{\B}$ under the free 2-category functor $(-)_\Cat\colon \Set\to\Cat_\Cat$. 
Let $B$ be an ordinary (i.e.\ $\Set$-enriched) category treated as a $2$-category, and let $F\colon B \to \Catv$ be a pseudofunctor.
The \textbf{Grothendieck construction of $F$} is the $\V$-category $Gr F$ with objects and morphisms
\begin{align*}
		Ob(GrF) &:= \;\; \coprod_{b\in B} \;\; Ob(F_b) \times \{b\}, \\
		GrF\big( (x,b), (y,c) \big) & :=\coprod_{f\colon b\to c} F_{c}(F_f x,y).
\end{align*}
Identity morphisms are given by
\begin{equation}\label{eq:grid}
	1_{(x,b)} := \xi_x \colon \1 \to F_b(F_{1_b}x,x) \subset \coprod_{f \colon b \to b} F_b(F_f x, x) = GrF\big( (x,b), (x,b)\big)
\end{equation}
while composition is induced by the composite
\[
	\begin{tikzcd}[column sep = large]
		F_b(F_f x,y)\otimes F_d(F_g y,z) \ar[r, "F_g\otimes 1"] \ar[d, dotted] & F_d(F_g F_f x,F_g y)\otimes F_d(F_g y,z) \ar[d, "\cong", "(-\circ \theta_x)\otimes 1"']
		\\
		F_d(F_{gf}x,z)   & F_d(F_{gf}x,F_g y)\otimes F_d(F_g y,z) \ar[l, "\circ"]
	\end{tikzcd}
\]
where $b \xrightarrow{f} c \xrightarrow{g} d$.
This extends to a functor out of $GrF\big( (x,b), (y,c) \big) \otimes GrF\big( (y,c), (z,d)\big)$ because $\otimes$ preserves coproducts.

\end{defi}

\begin{rem}
We will see in what follows that the $GrF$ admits an opfibration to the free $\V$-category $B_\V$ on $B$. 
If instead we had $F \colon B^{op} \to \Catv$, we could similarly define the $\V$-category $Gr^\vee F$ where
\begin{align*}
		Ob(Gr^\vee F) &:= \;\; \coprod_{b\in B} \;\; Ob(F_b) \times \{b\}, \\
		Gr^\vee F\big( (x,b), (y,c) \big) & :=\coprod_{f\colon b\to c} F_b(x, F_f y),
\end{align*}
and show that $Gr^\vee F$ admits a \emph{fibration} to $B_\V$.
The properties of $Gr^\vee F$ are formally dual to $Gr F$. 
\end{rem}

We next produce the functor that we want to show is an opfibration.

\begin{lem}
	Let $F \colon B \to \Catv$ be a pseudofunctor, and $GrF$ its Grothendieck construction. 
	There is a $\V$-functor $p \colon Gr F\to B_\V$.
\end{lem}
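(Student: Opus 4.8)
The plan is to define $p \colon GrF \to B_\V$ on objects by the evident projection $(x,b) \mapsto b$, and on hom-objects by the map
\[
	GrF\big((x,b),(y,c)\big) = \coprod_{f \colon b \to c} F_c(F_f x, y) \longrightarrow \coprod_{f \colon b \to c} \1 = B_\V(b,c)
\]
given coproductwise by the unique map $F_c(F_f x, y) \to \1$ to the terminal object $\1$ (using that $\V$ is semicartesian). In other words, $p$ on homs is the coproduct over $f \in B(b,c)$ of the terminal maps, so it simply remembers which summand a morphism lies in. First I would write down these two assignments explicitly and observe that $p$ sends the summand $F_c(F_f x,y)$ into the summand $\1_f$, which is exactly the coproduct injection indexed by $f$.

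The substance of the proof is checking that $p$ is a $\V$-functor, i.e.\ that it respects identities and composition. For identities: by definition $1_{(x,b)} = \xi_x \colon \1 \to F_b(F_{1_b}x,x)$ lands in the summand indexed by $1_b \colon b \to b$, and composing with the terminal map $F_b(F_{1_b}x,x) \to \1$ gives a map $\1 \to \1$, which must be the identity since $\1$ is terminal; this is precisely the inclusion $\1 = \1_{1_b} \hookrightarrow B_\V(b,b)$, which under the identification of Remark \ref{rem:identifyBBV0} is $1_b = 1_{p(x,b)}$. For composition: I would chase the defining diagram for composition in $GrF$ through $p$. The composition map sends the summand $F_c(F_f x,y) \otimes F_d(F_g y, z)$ (indexed by $(f,g)$) into the summand $F_d(F_{gf}x,z)$ (indexed by $gf$); applying the terminal maps everywhere, the whole diagram collapses — each object maps to $\1$ and all maps become $1_\1$ by terminality — and one reads off that $p$ of the composite equals $\1_{gf} \hookrightarrow B_\V(b,d)$, while composition in $B_\V$ of $\1_f \hookrightarrow B_\V(b,c)$ with $\1_g \hookrightarrow B_\V(c,d)$ is exactly $\1_{gf} \hookrightarrow B_\V(b,d)$ (this is how composition in the free $\V$-category $B_\V$ is defined, via $\1 \otimes \1 \cong \1$ and the inclusion indexed by $g \circ f$). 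Since $\otimes$ preserves coproducts, it suffices to check this summandwise, which is what the diagram chase does.

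The main obstacle — really the only non-formal point — is bookkeeping the coproduct decompositions and making sure that $p$ on homs is well-defined as a single map out of the coproduct and genuinely respects the indexing: one must confirm that the composition functor of Definition \ref{defi:gc} does send the $(f,g)$-summand of the tensor product into the $(gf)$-summand of the target, so that applying $p$ is compatible with how composition in $B_\V$ permutes summands. This is where extensivity/semicartesianness is used implicitly (the terminal maps assemble into a map of coproducts, and $\otimes$ commuting with coproducts lets composition be defined summandwise in the first place). Everything else reduces to the universal property of the terminal object $\1$. I would end by remarking that $p$ is the identity on the index set $B(b,c)$, so that on underlying categories $p_0 \colon (GrF)_0 \to (B_\V)_0 \cong B$ is simply the projection $(x,b) \mapsto b$ — a fact that will be used in the subsequent verification that $p$ is an opfibration.
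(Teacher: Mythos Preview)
Your proposal is correct and follows exactly the paper's approach: define $p$ as the projection on objects and as the coproduct of terminal maps on homs, using semicartesianness of $\V$. The paper in fact gives only this definition and explicitly leaves the identity and composition coherences to the reader, so your additional verification of these coherences simply fills in what the paper omits.
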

\begin{proof}
	We will describe the functor but leave checking of the necessary coherences to the reader. 
	On objects, $p$ simply projects down to $B$, sending $(x,b)$ to $b$. 
	On morphisms, we need to give a $\V$-morphism 
	\[
		GrF\big( (x,b), (y,c)\big) = \coprod_{f \colon b \to c} F_b(F_f x,y)  \to \coprod_{f\colon b\to c}\1 = B_\V(b,c).
	\]
	Since $\V$ is semi-cartesian, each $F_b(F_f x,y)$ has a unique map to $\1$.
	Taking the coproduct of these maps over $B(b,c)$, we obtain the desired morphism.
\end{proof}

\subsection{The Grothendieck construction $Gr$}

\begin{prop}\label{prop:grisopfib}
	The functor $p \colon Gr F \to B_\V$ is an opfibration.
\end{prop}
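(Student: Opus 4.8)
The plan is to verify the two defining properties of an opfibration from Definition \ref{def:opfib}: for every object $(x,b) \in GrF$ and every morphism $\varphi \colon \1 \to B_\V(b,c)$, we must exhibit an object of $GrF$ over $c$ together with a $p$-opcartesian lift of $\varphi$. Using the identification $B = (B_\V)_0$ from Remark \ref{rem:identifyBBV0}, such a $\varphi$ is (the inclusion of $\1$ corresponding to) an actual morphism $f \colon b \to c$ in $B$. The natural candidate for the lift is the object $(F_f x, c)$, and for the chosen opcartesian map the morphism
\[
	\chi(f,(x,b)) \colon \1 \xrightarrow{1_{F_f x}} F_c(F_f x, F_f x) \hookrightarrow \coprod_{g \colon b \to c} F_c(F_g x, F_f x) = GrF\big((x,b),(F_f x, c)\big),
\]
i.e.\ the identity of $F_f x$ sitting in the $f$-summand. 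It is immediate from the definition of $p$ on morphisms that $p$ sends this to the $f$-summand inclusion $\1 \hookrightarrow B_\V(b,c)$, so $\chi$ lies over $f$.

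The substance of the proof is checking that $\chi$ is $p$-opcartesian, i.e.\ that for every $(z,d) \in GrF$ the square \eqref{eq:opcartesian} is a pullback in $\V$. Concretely, this square has top-right corner $GrF\big((x,b),(z,d)\big) = \coprod_{h \colon b \to d} F_d(F_h x, z)$, top-left corner $GrF\big((F_f x,c),(z,d)\big) = \coprod_{k \colon c \to d} F_d(F_k F_f x, z)$, bottom row $B_\V(c,d) = \coprod_{k\colon c\to d}\1 \to B_\V(b,d) = \coprod_{h\colon b\to d}\1$ induced by precomposition with $f$ (which sends the $k$-summand to the $(kf)$-summand), and the right vertical $p$ is the coproduct of the unique maps to $\1$. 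I would first compute the pullback of the bottom-right-corner cospan. Because $\V$ is extensive, pullback distributes over the coproduct decomposition $B_\V(b,d) = \coprod_h \1$, so the pullback of $GrF((x,b),(z,d)) \to B_\V(b,d) \leftarrow B_\V(c,d)$ decomposes as $\coprod_{k\colon c\to d} F_d(F_{kf} x, z)$ — exactly the $h = kf$ summands of the top-right corner, one copy for each $k$. It then remains to identify the comparison map from $GrF\big((F_f x,c),(z,d)\big) = \coprod_k F_d(F_k F_f x, z)$ into this pullback; summand-wise it is the map $F_d(F_k F_f x, z) \to F_d(F_{kf} x, z)$ given by precomposition with the structure isomorphism $\theta_x \colon \1 \to F_d(F_{kf}x, F_k F_f x)$ (this is exactly how precomposition with $\chi$ acts, by the definition of composition in $GrF$). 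Since $\theta$ is a natural isomorphism, each of these summand maps is an isomorphism, hence so is the coproduct, and the square is a pullback.

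I would close by noting that the chosen lifts do land over the prescribed $c$ and $f$, so the two bullets of Definition \ref{def:opfib} are satisfied, and a brief remark (Remark \ref{rem:assumption4used}) indicating precisely where Assumptions 2 and 4 entered: Assumption 4 to identify morphisms $\1 \to B_\V(b,c)$ with elements of $B(b,c)$ (so that the lift makes sense and is essentially unique), and extensivity (Assumption 2, parts (i) and (ii)) to compute the pullback against the coproduct $B_\V(b,d) = \coprod_h \1$ summand by summand. The main obstacle I anticipate is purely bookkeeping: correctly matching the indexing of the coproducts across the square — tracking that precomposition with $f$ reindexes $k \mapsto kf$, and that the $h$-summand of the pullback is nonempty only for those $h$ in the image of this reindexing — and then confirming that the induced comparison map is precisely precomposition by $\theta_x$ on each summand, so that invertibility of $\theta$ does the job. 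There is no deep difficulty beyond carefully unwinding the definition of composition in $GrF$ and invoking extensivity.
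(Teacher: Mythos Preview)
Your proposal is correct and follows essentially the same approach as the paper: the same chosen lift $(F_f x,c)$ and opcartesian map $1_{F_f x}$, the same use of Assumption~4 to identify $\1 \to B_\V(b,c)$ with a map in $B$, and the same appeal to extensivity (Properties 2(i) and 2(ii)) to compute the pullback summand-by-summand over $B_\V(b,d)$. The only difference is presentational: the paper first replaces the top-left corner $\coprod_k F_d(F_k F_f x, z)$ by $\coprod_k F_d(F_{kf} x, z)$ ``by pseudofunctoriality of $F$'' and then observes the resulting square is a pullback, whereas you compute the pullback first and then identify the comparison map as summand-wise precomposition with the invertible $\theta_x$; your version makes the role of $\theta$ more explicit but is otherwise the same argument.
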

\begin{proof}
	For all $(x,b) \in Gr F$ and $f \colon \1 \to B_\V(b,c)$, we need $\lift{f} (x,b) \in Gr F$ and a $p$-opcartesian lift $\chi\big(f, (x,b)\big) \colon \1 \to Gr F\big( (x,b), \lift{f} (x,b) \big)$.

	Since we have assumed that $(B_\V)_0 = B$, such an $f$ is precisely a map $f \colon b \to c$ in $B$.
	We may thus let $\lift{f}(x,b) := (F_f x, c)$ and take $\chi\big(f, (x,b)\big)$ to be the identity of $F_f x$:
	\begin{equation} \label{eq:Gr-chi}
		\chi\big(f, (x,b)\big) \colon \1 \xrightarrow{1_{F_f x}} F_b(F_f x, F_f x ) \hookrightarrow Gr F\big( (x,b), \lift{f}(x,b) \big).
	\end{equation}
	For every $(y,d) \in Gr F$, we need the following diagram to be a pullback, where we have written $\chi$ for $\chi\big( f, (x,b) \big)$:
	\[
		\begin{tikzcd}[column sep = large, ]
			Gr F \big( \lift{f}(x,b), (y,d) \big) \ar[d, "p"'] \ar[r, "-\circ \chi"] & Gr F\big( (x,b) , (y,d)\big) \ar[d, "p"]
			\\
			B_\V(c, d) \ar[r, "-\circ f"] & B_\V(b,d)
		\end{tikzcd}
	\]
	But by definition of the various objects involved and the pseudofunctoriality of $F$, the diagram above is equivalent to the diagram below, which is a pullback diagram since $\V$ is extensive:
	\[
		\begin{tikzcd}[column sep = large, ]
			\coprod\limits_{g \colon c \to d} F_d(F_{gf} x, y) 
			\ar[d, "p"'] \ar[r, ] & 
			\coprod\limits_{h \colon b \to d} F_d(F_h x, y) 
			\ar[d, "p"]
			\\
			\coprod\limits_{g \colon c \to d} \1 \ar[r, "-\circ f"] & \coprod\limits_{h \colon b \to d} \1
		\end{tikzcd}
	\]
	In more detail, Property 2(i) ensures that the following is a pullback, where the horizontal arrows are given by re-indexing along $f$ (i.e.\ precomposition with $f$):
	\[
		\begin{tikzcd}[column sep = large, ]
			F_d(F_{gf} x, y) 
			\ar[d, "p"'] \ar[r, hookrightarrow] & 
			\coprod\limits_{h \colon b \to d} F_d(F_h x, y) 
			\ar[d, "p"]
			\\
			\1_g \ar[r, hookrightarrow, "- \circ f"] & \coprod\limits_{h \colon b \to d} \1
		\end{tikzcd}
	\]	
	Property 2(ii) then implies that the diagram above it is also a pullback.
\end{proof}

 	\begin{rem} \label{rem:assumption4used}
 		Note that we only have functors $F_f$ for $f \in (B_\V)_0$ arising from actual maps in $B$.
 		This is the reason for requiring $(B_\V)_0 = B$ .
 	\end{rem}
 	 	
 	\begin{thm}
 		The construction $F\mapsto GrF$ extends to a 2-functor 
 		\[
 			Gr \colon \Fun(B,\Catv)\to \OpFib(B_\V).
		\] 
 	\end{thm}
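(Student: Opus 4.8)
The plan is to extend the object assignment $F \mapsto GrF$ to 1-cells and 2-cells of the respective 2-categories, mirroring the proof that $\Inv$ is a 2-functor. Given a pseudonatural transformation $\alpha \colon F \To G$ in $\Fun(B,\Catv)$, I would build an opfibered functor $Gr\alpha \colon GrF \to GrG$ over $B_\V$. On objects, send $(x,b)$ to $(\alpha_b x, b)$, so that $p \circ Gr\alpha = p$ on the nose. On hom-objects, I need a $\V$-map
\[
	\coprod_{f \colon b \to c} F_c(F_f x, y) \longrightarrow \coprod_{f \colon b \to c} G_c(G_f \alpha_b x, \alpha_c y),
\]
which I define coproduct-summand-wise: on the $f$-summand, apply the $\V$-functor $\alpha_c \colon F_c \to G_c$ to get $G_c(\alpha_c F_f x, \alpha_c y)$, then precompose with the component $(\alpha_f)_x \colon \1 \to G_c(G_f \alpha_b x, \alpha_c F_f x)$ of the natural isomorphism in \eqref{eq:trans}. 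Functoriality of $Gr\alpha$ (compatibility with the composition and identities of Definition \ref{defi:gc}) will follow from the coherence axioms for $\alpha$ as a pseudonatural transformation together with those for $\theta$ and $\xi$; I would state that these are routine diagram chases and leave them to the reader, as the paper does elsewhere. That $Gr\alpha$ is opfibered follows from Lemma \ref{lem:opfibered-cocart}: the chosen opcartesian lift $\chi(f,(x,b))$ from \eqref{eq:Gr-chi} is (essentially) an identity, and $Gr\alpha$ sends it to a map lying in the appropriate summand which is again a chosen — hence, by Lemma \ref{lem:iso-to-chosen}, an arbitrary — opcartesian lift in $GrG$.

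For 2-cells, given a modification $\Gamma \colon \alpha \Tto \beta$ between transformations $F \To G$, I would produce a $\V$-natural transformation $Gr\Gamma \colon Gr\alpha \To Gr\beta$ over $B_\V$ whose component at $(x,b)$ is the composite $\1 \xrightarrow{(\Gamma_b)_x} G_b(\beta_b x, \alpha_b x) \hookrightarrow GrG((\alpha_b x,b),(\beta_b x,b))$, landing in the $1_b$-summand. $\V$-naturality of $Gr\Gamma$ reduces, summand by summand, to the modification axiom \eqref{eq:mod} relating $\alpha_f$, $\beta_f$ and the $\Gamma_b$, and the fact that $Gr\Gamma$ lies over $1_{B_\V}$ is immediate since each component sits in the identity summand.

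Finally I would check that $Gr$ respects identities and both kinds of composition of 1- and 2-cells, i.e.\ that it is a genuine 2-functor and not merely a pseudofunctor. The identity transformation $1_F$ goes to the identity opfibered functor because $\alpha_b = 1_{F_b}$ and $\alpha_f$ is the identity 2-cell; horizontal and vertical composites are preserved because the formulas above are built out of whiskering and pasting operations that $\Catv$ and the coproduct structure respect strictly. I expect the main obstacle to be purely organizational: verifying functoriality of $Gr\alpha$ and the coherence interplay between the pseudonaturality data $(\alpha_f)$ and the structure maps $(\theta,\xi)$ baked into the composition of $GrF$ and $GrG$. None of this is conceptually hard, but writing it all out carefully is lengthy, so in keeping with the style of the paper I would present the constructions explicitly and defer the bookkeeping verifications to the reader.
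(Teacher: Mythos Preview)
Your proposal is correct and follows essentially the same approach as the paper: define $Gr\alpha$ on objects by $(x,b)\mapsto(\alpha_b x,b)$ and on hom-summands by applying $\alpha_c$ then precomposing with $(\alpha_f)_x$, verify opfiberedness via Lemma~\ref{lem:opfibered-cocart} and Lemma~\ref{lem:iso-to-chosen} on the chosen lifts~\eqref{eq:Gr-chi}, and build $Gr\Gamma$ from the components $(\Gamma_b)_x$ in the identity summand. Two small slips: the component of $\Gamma_b$ lands in $G_b(\alpha_b x,\beta_b x)$, not $G_b(\beta_b x,\alpha_b x)$, and to actually hit the $1_b$-summand $G_b(G_{1_b}\alpha_b x,\beta_b x)$ you must precompose with the unit isomorphism $\xi^{-1}$, as the paper does explicitly.
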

 	
 	\begin{proof}
 		From Proposition \ref{prop:grisopfib} we have that the construction takes pseudofunctors to opfibrations. 
 		It remains to show that it takes pseudonatural transformations to opfibered functors, and modifications to natural transformations of opfibered functors.

 		Let $\alpha\colon F\Rightarrow G$ be a pseudonatural transformation between pseudofunctors $F,G \colon B \to \Catv$. 
 		In particular, for every $b\in B$ we have a $\V$-functor $\alpha_b\colon F_b\to G_b$ such that for any $f\colon b\to c$ in $B$ we have an \emph{invertible} 2-cell $\alpha_f \colon G_f\alpha_{b} \cong \alpha_b F_f$ with components
 		\begin{equation} \label{eq:alphaGFiso}
 			(\alpha_{f})_x \colon \1 \to G_b(G_f \alpha_b x , \alpha_b F_f x).
		\end{equation}
		Define an opfibered functor $a \colon GrF \to Gr G$ as follows:
 		on objects,
 		\[
 			a (x,b) := (\alpha_b x, b).
 		\]
 		On morphisms, take the composite:
 		\begin{equation} \label{eq:a-on-maps}
 			\begin{tikzcd}[row sep = small]
 				GrF\big( (x,b),  (y,c)\big) \ar[rr, dotted, "a_{(x,b),(y,c)}"] \ar[d, equals] & & GrG\big( (\alpha_b x, b), (\alpha_b y, c) \big) \ar[d, equals]
 				\\
				\coprod\limits_{f\colon b\to c} F_b(F_f x,y) \ar[r, "\alpha_b"]
				&
				\coprod\limits_{f\colon b\to c} G_b(\alpha_bF_f x,\alpha_b y) \ar[r, "-\circ (\alpha_{f})_x" , "\cong"']
				&
				\coprod\limits_{f\colon b\to c} G_b(G_f \alpha_b x,\alpha_b y)		
 			\end{tikzcd}
 		\end{equation}
 		Pseudofunctoriality of $F$ and $G$, along with pseudonaturality of $\alpha$, immediately indicate that these data assemble into a $\V$-functor $GrF\to GrG$ and that this $\V$-functor is compatible with the opfibrations $p\colon GrF\to B_\V$ and $q\colon GrG\to B_\V$ (i.e.~$qa =p$). However, we must check that $a$ is actually opfibered.

 		By Proposition \ref{lem:opfibered-cocart}, it suffices to show that $a$ sends chosen $p$-opcartesian lifts to $q$-opcartesian maps.
 		Recall from (\ref{eq:Gr-chi}) that the chosen $p$-opcartesian lifts are induced by the identity maps $1_{F_f x}$ for $(x,b) \in Gr F$ and $f \colon b \to c$ in $B$.
 		Since $\alpha_b$ is a $\V$-functor, these are sent to identity maps $1_{\alpha_b F_f x}$, which are precisely the chosen $q$-opcartesian lifts in $Gr G$.
 		By Lemma \ref{lem:iso-to-chosen}, the composite of $1_{\alpha_b F_f x}$ with the  isomorphism $(\alpha_{f})_x$ remains $q$-opcartesian, as desired.

 	Now it remains to show that $Gr$ takes modifications to natural transformations of opfibered functors. 	
	Let $\Gamma\colon\alpha\Tto\beta \colon F \To G$ be a modification of pseudonatural transformations.
	This includes the data of natural transformations $\alpha_b \To \beta_b \colon F_b \to G_b$ with components $\1 \to G_b(\alpha_b x, \beta_b x)$ for each $b \in B$ and $x \in F_b$.
	Composing with the isomorphism $\alpha_b x \cong G_{1_b} \alpha_b x$, we obtain maps
	\[
		\1 \to G_b(G_{1_b} \alpha_b x, \beta_b x) \hookrightarrow Gr G\big((\alpha_bx,b),(\beta_bx,b) \big)
	\]
	which are precisely the $(x,b)$-components of the desired natural transformation $Gr\Gamma \colon Gr\,\alpha\To Gr\,\beta$.
 	\end{proof}

\section{The Grothendieck Correspondence} \label{sec:grcorr}

In this section we show that the Grothendieck construction of \S \ref{sec:grcon} and the  inverse Grothendieck construction of \S \ref{sec:invgrcon} do behave as inverses when the base category $\B$ is of the form $B_\V$.
Throughout, we make the same assumptions as \S \ref{sec:grcon}, including the identification $B = (B_\V)_0$ from Remark \ref{rem:identifyBBV0}.

\subsection{$Gr \circ I$}

	We first prove some properties of opfibrations over an arbitrary $\B$, before specializing to opfibrations over $B_\V$.

	Let $p \colon \cE \to \B$ be an opfibration, and let $q \colon Gr \cE_\bullet \to (\B_0)_\V$ be the opfibration that results from applying $I$ and $Gr$ to $p$.
	The objects of $Gr \cE_\bullet$ are pairs $(e, pe)$ where $e \in \cE$, while the morphisms are
	\[
		Gr \cE_\bullet\big( (e,pe), (e', pe') \big) = \coprod_{f \in \B_0(pe, pe')} \cE_{pe'}(\lift{f}e, e').
	\]

\begin{lem}
%	Let $p \colon \cE \to \B$ be an opfibration.
	For each $f \in \B_0(pe, pe')$, we have
	\[
		\cE_{pe'}(\lift{f}e,e') \cong \cE_f(e,e')
	\]
	where $\cE_f(e,e')$ is defined to be the pullback:
	\[
		\label{eq:}
		\begin{tikzcd}
			\cE_f(e,e') \arrow[dr, phantom, "\lrcorner", very near start] \ar[r] \ar[d] & \cE(e,e') \ar[d, "p" ] 
			\\
			\1 \ar[r, "f"] & \B(pe,pe') 
		\end{tikzcd}	
	\]
\end{lem}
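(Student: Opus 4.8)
The plan is to exhibit $\cE_{pe'}(\lift{f}e,e')$ as a composite of two pullback squares and then invoke the pasting law for pullbacks. First I would collect the two pullbacks already at hand. On the one hand, by (\ref{eq:fiber-homs}) the fiber hom-object $\cE_{pe'}(\lift{f}e,e')$ is the pullback of $p\colon\cE(\lift{f}e,e')\to\B(pe',pe')$ along $1_{pe'}\colon\1\to\B(pe',pe')$. On the other hand, the chosen lift $\chi(f,e)\colon\1\to\cE(e,\lift{f}e)$ is $p$-opcartesian and lies over $f$, i.e.\ $p\chi(f,e)=f$; hence Definition \ref{def:opcartesian}, applied with its ``$d$'' taken to be $e'$, says precisely that $-\circ\chi(f,e)\colon\cE(\lift{f}e,e')\to\cE(e,e')$, together with the two vertical copies of $p$ and the map $-\circ f\colon\B(pe',pe')\to\B(pe,pe')$, forms a pullback square.

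Next I would paste these two squares horizontally along their common edge $p\colon\cE(\lift{f}e,e')\to\B(pe',pe')$, obtaining
\[
\begin{tikzcd}[column sep = large]
\cE_{pe'}(\lift{f}e,e') \ar[r] \ar[d] \arrow[dr, phantom, "\lrcorner", very near start] & \cE(\lift{f}e,e') \ar[d, "p"] \ar[r, "-\circ\chi(f,e)"] \arrow[dr, phantom, "\lrcorner", very near start] & \cE(e,e') \ar[d, "p"] \\
\1 \ar[r, "1_{pe'}"'] & \B(pe',pe') \ar[r, "-\circ f"'] & \B(pe,pe')
\end{tikzcd}
\]
Both inner squares are pullbacks, so by the pasting law the outer rectangle is a pullback. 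Its bottom edge is the composite $(-\circ f)\circ 1_{pe'}$, which is $f$ (precomposing the identity with $f$), so the outer rectangle is exactly the square defining $\cE_f(e,e')$. By the uniqueness of pullbacks up to canonical isomorphism we conclude $\cE_{pe'}(\lift{f}e,e')\cong\cE_f(e,e')$; moreover this isomorphism commutes with the evident projections to $\cE(e,e')$ and to $\1$, which is what one needs for later naturality arguments.

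I do not anticipate a genuine obstacle here: the whole argument is a single use of the pasting law. The only bookkeeping is to check $p\chi(f,e)=f$ (immediate, since $\chi(f,e)$ is by definition a lift of $f$), that $(-\circ f)\circ 1_{pe'}=f$ (composition with an identity), and to notice that Definition \ref{def:opcartesian} is literally the assertion that the right-hand inner square above is a pullback. If anything requires care beyond the stated lemma it is verifying that the isomorphism is natural in $e$ and $e'$, but that too follows from the uniqueness clause of the pullback universal property and is not needed for the statement as given.
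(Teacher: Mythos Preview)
Your proposal is correct and matches the paper's proof essentially verbatim: the paper pastes the fiber pullback (\ref{eq:fiber-homs}) with the opcartesian pullback of Definition~\ref{def:opcartesian} to obtain exactly your rectangle, then observes that the outer cospan is the defining cospan for $\cE_f(e,e')$. The only cosmetic difference is that the paper cites Definition~\ref{def:opfib} (which packages the chosen lift together with its opcartesian property) where you cite Definition~\ref{def:opcartesian} directly.
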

\begin{proof}
	By Definition \ref{def:opfib} and (\ref{eq:fiber-homs}), we have a composite of pullbacks:
	\begin{equation} \label{eq:GrI-Id-components}
		\begin{tikzcd}[row sep = large]
			\cE_{pe'}(\lift{f}e, e') \ar[d] \ar[r] \arrow[dr, phantom, "\lrcorner", very near start]  
			& 
			\cE(\lift{f}e,e') \ar[d, "p"'] \ar[r, "-\circ \chi{(f,e)}"] \arrow[dr, phantom, "\lrcorner", very near start]
			& 
			\cE(e,e') \ar[d, "p"]
			\\
			\1 \ar[r, "1_{pe'}"] & \B(pe',pe') \ar[r, "-\circ f"] & \B(pe, pe')
			%\1 \ar[r, "1_{pe'}"] & B_\V(pe',pe') \ar[r, "-\circ g"] & B_\V(pe, pe')
		\end{tikzcd}
	\end{equation}
	But the outer cospan is also the defining cospan for the pullback $\cE_f(e,e')$, hence these two pullbacks are isomorphic.
\end{proof}

\begin{defi}
	Let $\epsilon_p \colon Gr \cE_\bullet \to \cE$ denote the functor that sends $(e,pe)$ to $e$, and whose action on morphisms is induced by the upper horizontal composite in (\ref{eq:GrI-Id-components}):
	\[
		Gr \cE_\bullet \big( (e,pe), (e', pe') \big) = \coprod_{f \in \B_0(pe, pe')} \cE_{pe'}(\lift{f}e, e')
		\xrightarrow{\quad -\circ {\chi(f,e)} \quad} \cE(e,e').
	\]
\end{defi}

\begin{lem} \label{lem:GrI-pullback}
	The functor $\epsilon_p \colon Gr \cE_\bullet \to \cE$ fits into the pullback
	\begin{equation} \label{eq:GrI-pullback}
		\begin{tikzcd}
			Gr \cE_\bullet \ar[dr, phantom, "\lrcorner", very near start] \ar[d, "q"'] \ar[r, "\epsilon_p"] & \cE \ar[d, "p"]
			\\
			(\B_0)_\V \ar[r, "\sigma_\B"] & \B
		\end{tikzcd}
	\end{equation}
	where $\sigma$ is the counit of the adjunction (\ref{eq:CatVCat}).
\end{lem}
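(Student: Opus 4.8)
The plan is to verify directly that the square (\ref{eq:GrI-pullback}) satisfies the universal property of a pullback, since we already have an explicit description of all four $\V$-categories and all four $\V$-functors. Note first that the square commutes: on objects, $(e,pe) \mapsto e \mapsto pe$ equals $(e,pe) \mapsto pe \mapsto pe$, using that $\sigma_\B$ is the identity on objects (elements of $(\B_0)_\V$ are objects of $\B$); on morphisms, tracing through (\ref{eq:GrI-Id-components}), the bottom-then-right composite sends an element of $\cE_{pe'}(\lift{f}e,e')$ sitting in the $f$-summand to $f \circ (\text{projection}) \in \B(pe,pe')$, while the top-then-right composite sends it to $p(-\circ \chi(f,e))$, and these agree by the commutativity of (\ref{eq:GrI-Id-components}).

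Next I would check the universal property hom-object by hom-object, reducing the $\V$-categorical statement to a statement about pullbacks in $\V$. Since a pullback of $\V$-categories whose legs are identity-on-objects on the base is computed on objects as a pullback of sets and on hom-objects as a pullback in $\V$, it suffices to show that for each pair $e, e' \in \cE$ the square
\[
	\begin{tikzcd}
		\displaystyle\coprod_{f \in \B_0(pe,pe')} \cE_{pe'}(\lift{f}e,e') \ar[d] \ar[r] & \cE(e,e') \ar[d, "p"]
		\\
		\displaystyle\coprod_{f \in \B_0(pe,pe')} \1 \ar[r, "\sigma_\B"] & \B(pe,pe')
	\end{tikzcd}
\]
is a pullback in $\V$, where the left leg is the coproduct of the maps $\cE_{pe'}(\lift{f}e,e') \to \1$ and the bottom map $\sigma_\B$ is the counit component $(\B_0)_\V(pe,pe') = \coprod_{f} \1 \to \B(pe,pe')$ that includes the $f$-summand via $f \colon \1 \to \B(pe,pe')$. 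By extensivity (Property 2(ii), universality of coproducts), the pullback of $\cE(e,e') \xrightarrow{p} \B(pe,pe')$ along this bottom map decomposes as $\coprod_f \big(\1 \times_{\B(pe,pe')} \cE(e,e')\big)$, and each summand is exactly $\cE_f(e,e')$ by definition. By the preceding Lemma, $\cE_f(e,e') \cong \cE_{pe'}(\lift{f}e,e')$, and one checks this isomorphism is compatible with the maps into $\cE(e,e')$ (it is, by the lemma's proof, which identifies these as the same pullback cone). Assembling over all $f$ gives the claimed pullback.

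Finally I would remark that these hom-object isomorphisms are natural/functorial — this follows from the uniqueness clauses in Lemma \ref{lem:iso-to-chosen} and the construction of $\epsilon_p$, which is already set up to be compatible with the pullback structure of (\ref{eq:GrI-Id-components}) — so that they assemble into an isomorphism of $\V$-categories $Gr\cE_\bullet \cong (\B_0)_\V \times_\B \cE$ commuting with all the structure maps, which is precisely the assertion that (\ref{eq:GrI-pullback}) is a pullback. The main obstacle is bookkeeping rather than conceptual: one must be careful that the map labelled $\sigma_\B$ on hom-objects really is the coproduct-inclusion determined by the $f$'s (this is where the identification $B = (B_\V)_0$ and the description of the counit from Remark \ref{rem:BV0} are used), and that extensivity is being applied to the correct cospan. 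Everything else is a routine diagram chase unwinding definitions already in place.
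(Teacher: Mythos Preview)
Your proposal is correct and follows essentially the same route as the paper: both arguments verify commutativity of the square, then use extensivity (Property 2(ii)) to decompose the hom-level pullback as $\coprod_f \cE_f(e,e')$ and invoke the preceding lemma to identify each summand with $\cE_{pe'}(\lift{f}e,e')$. One small inaccuracy: the lemma is stated for an arbitrary base $\B$, so the identification $B = (B_\V)_0$ is not needed here---the description of $\sigma_\B$ on hom-objects as $\coprod_f f$ holds for any $\B$ directly from the definition of the counit.
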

\begin{proof}
	Note that $(\B_0)_\V$ and  $\B$ have the same objects and $\sigma_\B$ is the identity on objects.
	Morphisms of $(\B_0)_\V$ are given by
	\[
		(\B_0)_\V(b,b') = \coprod_{f \colon \1 \to \B(b,b')} \1,
	\]
	and $\sigma_\B$ is the coproduct of the individual maps $f \colon \1 \to \B(b,b')$, i.e.\
	\[
		\sigma_\B = \coprod_{f \colon \1 \to \B(b,b')} f.
	\]
	The outer square in (\ref{eq:GrI-Id-components}) then shows that the square in (\ref{eq:GrI-pullback}) commutes.

	To see that this square is a pullback, we first note that another pullback of $p$ along $\sigma_\B$ is given by the category $\P$ with the same objects as $\cE$ and morphisms fitting into the pullback:
	\[
		\begin{tikzcd}
			\P(e,e') \ar[dr, phantom, "\lrcorner", very near start] \ar[d] \ar[r]  & \cE(e,e') \ar[d, "p"]
			\\
			\coprod\limits_{f \colon \1 \to \B(pe,pe')} \1 \ar[r, "\coprod f"] & \B(pe, pe')
		\end{tikzcd}
	\]
	Since pullbacks preserve coproduct decompositions, we obtain
	\begin{align*}
		\P(e,e') &\cong \coprod_{f \colon \1 \to \B(pe,pe')} \cE_f(e, e') \\
				  &\cong \coprod_{f \colon \1 \to \B(pe,pe')} \cE_{pe'}(\lift{f}e,e') \\
				  &= Gr \cE_\bullet(e,e'),
	\end{align*}
	where the second isomorphism is given by the previous Lemma.
	So $Gr$ is isomorphic to $\P$, and is thus also a pullback.
\end{proof}

Let $p' \colon \cE' \to \B$ be another opfibration, and let $q' = GrI(p')$. 
For each opfibered functor $k \colon \cE \to \cE'$ from $p$ to $p'$, there is an opfibered functor $GrI(k) \colon Gr \cE_\bullet \to Gr \cE_\bullet'$ from $q$ to $q'$.
On objects,  $GrI(k)$ sends $(e, pe)$ to $(ke, pe)$, while the action on morphisms is induced by the composite
\[
	\cE_{pe'}(\lift{f}e, e') \xrightarrow{\quad k \quad} \cE'_{pe'}(k \lift{f}e, ke') \cong \cE'_{pe'}(\lift{f} ke, ke'),
\]
where the isomorphism comes from (\ref{eq:alpha-f}), and satisfies:
\begin{equation} \label{eq:chi-k}
	\begin{tikzcd}
		\cE'_{pe'}(k\lift{f}e, ke') \ar[r, "\cong"] \ar[d, "- \circ k \chi{(f,e)}"'] & \cE'_{pe'}(\lift{f} ke, ke') \ar[d, "-\circ \chi{(f,ke)}"]
		\\
		\cE'(ke, ke') \ar[r, equals] & \cE'(ke, ke')
	\end{tikzcd}
\end{equation}
Let $k' \colon \cE \to \cE'$ be another opfibered functor and let $\gamma \colon k \To k'$ be a natural transformation over $\B$. 
Since $\gamma$ lies over $\B$, its components factor as
\[
	\gamma_e \colon \1 \to \cE'_{pe}(ke,k'e) \hookrightarrow \cE'(ke, k'e).
\]
The components $GrI(\gamma)_{(e, pe)}$ are then given by the composite
\[
	\1 \xrightarrow{\gamma_{e}} \cE'_{pe}(ke,k'e) \cong \cE'_{pe}(\lift{1}ke, k'e) \hookrightarrow \coprod_{f \in \B_0(pe,pe)} \cE'_{pe}(\lift{f}ke, k'e).
\]

\begin{lem} \label{lem:GrI-2-natural}
	The following diagram commutes for all opfibered functors $k, k'$ and natural transformations $\gamma$ over $\B$:
	\[
		\begin{tikzcd}[row sep = large, column sep = huge]
			Gr \cE_\bullet \ar[d, "\epsilon_p"'] \ar[r, "Gr I (k)", ""{name = U,below}, bend left = 17] \ar[r, "Gr I (k')"', ""{name = D, above}, bend right = 17] & Gr \cE_\bullet' \ar[d, "\epsilon_{p'}"] \ar[from = U, to = D, Rightarrow, "\,Gr I(\gamma)", shift right = 3]
			\\
			\cE \ar[r, "k", bend left = 17, ""{name = UU, below}] \ar[r, "k'"', bend right=17, ""{name = DD, above}] & \cE' \ar[from = UU, to = DD, Rightarrow, "\;\gamma", shift right = 2]
		\end{tikzcd}
	\]
\end{lem}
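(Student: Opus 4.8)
The plan is to reduce the statement to a componentwise check, using that a $\V$-natural transformation is determined by its components. Concretely one must verify: (i) $\epsilon_{p'}\circ GrI(k)=k\circ\epsilon_p$ as $\V$-functors $Gr\cE_\bullet\to\cE'$; (ii) the analogous equality with $k$ replaced by $k'$; and (iii) for each object $(e,pe)$ of $Gr\cE_\bullet$, that the $(e,pe)$-component of the whiskered $2$-cell $\epsilon_{p'}\ast GrI(\gamma)$ equals that of $\gamma\ast\epsilon_p$, as maps $\1\to\cE'(ke,k'e)$. Items (i) and (ii) establish that the two pastings have the same source and target $1$-cells, and (iii) that they agree as $2$-cells.

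For (i): on objects both composites send $(e,pe)$ to $ke$, since $\epsilon_p,\epsilon_{p'}$ are the evident projections and $GrI(k)$ acts by $k$ on the first coordinate. On hom-objects it is enough to compare the two composites on each coproduct summand $\cE_{pe'}(\lift f e,e')$ of $Gr\cE_\bullet\big((e,pe),(e',pe')\big)$, indexed by $f\in\B_0(pe,pe')$. Unwinding the definitions of $GrI(k)$ and of $\epsilon_{p'}$, the composite $\epsilon_{p'}\circ GrI(k)$ restricted to this summand is $k$, followed by the isomorphism $\cE'_{pe'}(k\lift f e,ke')\cong\cE'_{pe'}(\lift f ke,ke')$, followed by $-\circ\chi(f,ke)$; by diagram (\ref{eq:chi-k}) these last two maps compose to $-\circ k\chi(f,e)$, and then compatibility of $k$ with composition (it is a $\V$-functor), applied with $\chi(f,e)$ in one slot, identifies ``$k$ then $-\circ k\chi(f,e)$'' with ``$-\circ\chi(f,e)$ then $k$'', which is exactly $k\circ\epsilon_p$ on that summand. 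Item (ii) is the same argument verbatim with $k'$ in place of $k$.

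For (iii): by definition of whiskering, $(\gamma\ast\epsilon_p)_{(e,pe)}=\gamma_{\epsilon_p(e,pe)}=\gamma_e$, viewed as a map $\1\to\cE'_{pe}(ke,k'e)\hookrightarrow\cE'(ke,k'e)$. On the other side, $GrI(\gamma)_{(e,pe)}$ is $\gamma_e$ followed by the isomorphism $\cE'_{pe}(ke,k'e)\cong\cE'_{pe}(\lift{1_{pe}}ke,k'e)$ (precomposition with $\varepsilon_{(1_{ke})}$, the inverse of $\chi(1_{pe},ke)$ by Lemma \ref{lem:counit-iso}), landing in the summand indexed by $1_{pe}$; applying $\epsilon_{p'}$, whose restriction to that summand is $-\circ\chi(1_{pe},ke)$, and using $\varepsilon_{(1_{ke})}\circ\chi(1_{pe},ke)=1_{ke}$ from (\ref{eq:eps-e}), the isomorphism and the precomposition cancel and we recover $\gamma_e$. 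Hence both $2$-cells have equal components, so the square commutes. The verification is essentially bookkeeping; the only delicate point is keeping track of which isomorphism is precomposed where in step (iii) — the construction $\epsilon_{p'}$ is precisely what undoes the ``shift by $\lift f$'' introduced by $GrI$, so the apparent mismatch, that $GrI(\gamma)$ repackages a morphism $ke\to k'e$ as one $\lift{1_{pe}}ke\to k'e$, disappears once one invokes invertibility of $\chi(1_{pe},ke)$.
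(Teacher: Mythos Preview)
Your proof is correct and follows essentially the same approach as the paper's own proof: first check $\epsilon_{p'}\circ GrI(k)=k\circ\epsilon_p$ on objects and then summandwise on hom-objects using (\ref{eq:chi-k}) together with functoriality of $k$, and then check the $2$-cell equality componentwise by observing that the isomorphism $\cE'_{pe}(ke,k'e)\cong\cE'_{pe}(\lift{1_{pe}}ke,k'e)$ built into $GrI(\gamma)$ is undone by the precomposition with $\chi(1_{pe},ke)$ that constitutes $\epsilon_{p'}$ on that summand. Your write-up is in fact slightly more explicit than the paper's in identifying the isomorphism in step (iii) as precomposition with $\varepsilon_{(1_{ke})}$ and invoking (\ref{eq:eps-e}) to see the cancellation.
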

\begin{proof}
	We first verify that $\epsilon_{p'}\circ GrI(k) = k \circ \epsilon_p$ for any $k$.
	On objects, both composites send $(e, pe)$ to $ke$.
	By (\ref{eq:chi-k}) and the functoriality of $k$, the following diagram commutes,
	\[
		\begin{tikzcd}
			\cE_{pe'}(\lift{f}e, e') \ar[r, "k"] \ar[d, "- \circ \chi{(f,e)}"'] & \cE'_{pe'}(k\lift{f}e, ke') \ar[r, "\cong"] \ar[d, "- \circ k \chi{(f,e)}"] & \cE'_{pe'}(\lift{f} ke, ke') \ar[d, "-\circ \chi{(f,ke)}"]
			\\
			\cE(e,e') \ar[r, "k"] & \cE'(ke, ke') \ar[r, equals] & \cE'(ke, ke')
		\end{tikzcd}
	\]
	which implies that the corresponding diagram on morphisms commutes.

	Next, we verify that $\epsilon_{p'} \circ GrI(\gamma) = \gamma \circ \epsilon_p$ for any $\gamma$.
	By Definition \ref{def:whiskering}, $(\gamma \circ \epsilon_p)_{(e,pe)} = \gamma_{\epsilon_p(e,pe)} = \gamma_e$, while $\left(\epsilon_{p'} \circ GrI(\gamma)\right)_{(e,pe)}$ is the composite
	\[
		\1 \xrightarrow{\gamma_{e}} \cE'_{pe}(ke,k'e) \cong \cE'_{pe}(\lift{1}ke, k'e) \cong \cE'_{pe}(ke,k'e) \hookrightarrow \cE'(ke, k'e),
	\]
	which is again just $\gamma_e$.
\end{proof}

We now specialize to the case $\B = B_\V$. 
Recall from Remark \ref{rem:identifyBBV0} that we have $(B_\V)_0 = B$, so that $Gr$ and $I$ fit into the diagram:
\[
	\begin{tikzcd}
		\OpFib(B_\V) \ar[r, bend left, "I"] & \Fun(B, \Catv) \ar[l, bend left, "Gr"]
	\end{tikzcd}
\]

\begin{prop} \label{prop:IGr}
	The maps $\epsilon_p \colon Gr \cE_\bullet \to \cE$ (for opfibrations $p \colon \cE \to B_\V$) are the components of a $2$-natural isomorphism 
	\[
		\epsilon \colon Gr \circ I \Rightarrow 1_{\OpFib(B_\V)}.
	\]
\end{prop}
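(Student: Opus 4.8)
The strategy is that, over the base $B_\V$, Lemma~\ref{lem:GrI-pullback} already exhibits each $\epsilon_p$ as a pullback of $p$ along the counit $\sigma_{B_\V}$, and this counit is the identity; what remains is to upgrade ``isomorphism of $\V$-categories'' to ``isomorphism in $\OpFib(B_\V)$'' and to read off $2$-naturality from Lemma~\ref{lem:GrI-2-natural}.

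First I would show $\sigma_{B_\V} = 1_{B_\V}$. Under the identification $\iota_B = 1_B$ of Remark~\ref{rem:identifyBBV0} we have $((B_\V)_0)_\V = B_\V$ with $(\iota_B)_\V = 1_{B_\V}$, so the triangle identity $\sigma_{B_\V}\circ(\iota_B)_\V = 1_{B_\V}$ for the adjunction (\ref{eq:CatVCat}) forces $\sigma_{B_\V} = 1_{B_\V}$. Feeding this into Lemma~\ref{lem:GrI-pullback}, the square there is a pullback of $p$ along the identity, so $\epsilon_p \colon Gr\cE_\bullet \to \cE$ is an isomorphism of $\V$-categories with $p\circ\epsilon_p = q$.

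Next I would check that $\epsilon_p$ and $\epsilon_p^{-1}$ are opfibered, so that $\epsilon_p$ is an isomorphism in the $2$-category $\OpFib(B_\V)$. By Lemma~\ref{lem:opfibered-cocart} it suffices that $\epsilon_p$ send chosen $q$-opcartesian lifts to $p$-opcartesian maps. By (\ref{eq:Gr-chi}) applied to the pseudofunctor $\cE_\bullet$, the chosen $q$-opcartesian lift of $f \in B(pe,pe')$ at $(e,pe)$ is the identity $1_{\lift{f}e}$ sitting in the summand $\cE_{pe'}(\lift{f}e,\lift{f}e)$ of $Gr\cE_\bullet\big((e,pe),(\lift{f}e,pe')\big)$; and by the description of $\epsilon_p$ on morphisms (the upper composite of (\ref{eq:GrI-Id-components})) it is sent to $1_{\lift{f}e}\circ\chi(f,e) = \chi(f,e)$, the chosen $p$-opcartesian lift in $\cE$. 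Hence $\epsilon_p$ is opfibered; since an isomorphism of $\V$-categories lying over $B_\V$ preserves and reflects the defining pullback squares (\ref{eq:opcartesian}) of opcartesian maps, $\epsilon_p^{-1}$ is opfibered as well.

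Finally, $2$-naturality of $\epsilon$ is exactly Lemma~\ref{lem:GrI-2-natural} specialized to $\B = B_\V$: for every opfibered $k$ it gives $k\circ\epsilon_p = \epsilon_{p'}\circ GrI(k)$, and for every $\gamma$ over $B_\V$ it gives $\gamma\cdot\epsilon_p = \epsilon_{p'}\cdot GrI(\gamma)$, which are precisely the $1$-cell and $2$-cell naturality conditions for a $2$-natural transformation into the identity $2$-functor; combined with the previous step this makes $\epsilon$ a $2$-natural \emph{iso}morphism. I do not expect a genuine obstacle here, since the substantive content lives in Lemmas~\ref{lem:GrI-pullback} and~\ref{lem:GrI-2-natural}; the one point that needs real care is the identification $\sigma_{B_\V} = 1_{B_\V}$, which relies on taking the equality $B = (B_\V)_0$ of Remark~\ref{rem:identifyBBV0} at face value.
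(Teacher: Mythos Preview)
Your proposal is correct and follows essentially the same approach as the paper: identify $\sigma_{B_\V}$ with the identity, invoke Lemma~\ref{lem:GrI-pullback} for invertibility of each $\epsilon_p$, and invoke Lemma~\ref{lem:GrI-2-natural} for $2$-naturality. You are in fact more careful than the paper in explicitly verifying that $\epsilon_p$ (and its inverse) is opfibered, hence a genuine morphism in $\OpFib(B_\V)$---a point the paper's proof leaves implicit.
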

\begin{proof}
	Lemma \ref{lem:GrI-2-natural} is precisely the statement of $2$-naturality of $\epsilon$, while Lemma \ref{lem:GrI-pullback} implies that $\epsilon$ is an isomorphism.

	In detail, setting $\B=B_\V$ for some category $B$, we have
	\[
		\left((B_\V)_0 \right)_\V = B_\V
	\]
	so that $\sigma_{B_\V}$ is $1_{B_\V}$. 
	By Lemma \ref{lem:GrI-pullback}, each $q\colon Gr \cE_\bullet \to B_\V$ is a pullback of $p \colon \cE \to B_\V$ along an identity, hence is isomorphic to $p$ via $\epsilon_p$.
\end{proof}

\begin{rem}
	As a consequence of the isomorphism $Gr \cE_\bullet \cong \cE$ when $\cE$ is opfibered over $B_\V$, we obtain a coproduct decomposition
	\[
		\cE(e,e') \cong \coprod_{f \in B(pe,pe')} \cE_f(e,e').
	\]
	In fact, as long as pullbacks preserve coproduct decompositions, \emph{any} functor into a free $\V$-category $p \colon \cE \to B_\V$ yields such a coproduct decomposition
	simply by pulling $p$ back along $1_{B_\V}$. 
	This does not require $p$ to be an opfibration, nor $(B_\V)_0 = B$.
\end{rem}

\begin{rem}
	The results above also answer the question: when can we recover our original opfibration $p \colon \cE\to \B$ from its pseudofunctor $\cE_\bullet$?
	In other words, when can hope for an equivalence  
	\[
		\OpFib(\B) \cong \Fun(\B_0, \Catv)?
	\]
	One sees that this can happen only if the counit $\sigma_\B \colon (\B_0)_\V \to \B$ of the adjunction $(-)_\V \dashv (-)_0$ is an equivalence.
	But since we have also assumed that the unit $\iota_B$ is an equivalence, this means that $\V$ is equivalent to $\Set$.
\end{rem}	

\subsection{$I \circ Gr$}

\begin{prop} \label{prop:GrI}
	There is a $2$-natural isomorphism 
	\[
		\eta \colon 1_{\Fun(B, \Catv)} \Rightarrow I \circ Gr.
	\]
\end{prop}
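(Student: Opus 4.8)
The plan is to construct, for each pseudofunctor $F \colon B \to \Catv$, an equivalence (in fact an isomorphism) of pseudofunctors $\eta_F \colon F \To I(Gr F)$, and then check that these assemble into a $2$-natural transformation. Recall that $I(GrF) = (GrF)_\bullet$ is the pseudofunctor $B = (B_\V)_0 \to \Catv$ sending $b$ to the fiber $(GrF)_b$. So the first task is to identify this fiber. By definition of $GrF$ and the identification $(B_\V)_0 = B$ from Remark \ref{rem:identifyBBV0}, the objects of $(GrF)_b$ are pairs $(x,b)$ with $x \in F_b$, and by (\ref{eq:fiber-homs}) together with Property 2(ii) (pullbacks preserve coproduct decompositions) its hom-objects are
\[
	(GrF)_b\big( (x,b), (y,b) \big) \cong (GrF)_{1_b}(x,y) \cong F_b(F_{1_b} x, y).
\]
Composing with the structure isomorphism $\xi_x \colon F_{1_b}x \cong x$ gives an isomorphism $F_b(F_{1_b}x,y) \cong F_b(x,y)$, natural in $x$ and $y$; this is exactly the data of a $\V$-functor $\eta_{F,b} \colon F_b \to (GrF)_b$ which is the identity on objects and an isomorphism on hom-objects, hence an isomorphism of $\V$-categories. (One must check this respects identities and composition in $(GrF)_b$, which follows from the pseudo-unit axioms (\ref{eq:pseudo-unit-left}), (\ref{eq:pseudo-unit-right}) for $F$.)

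Next I would promote the family $\{\eta_{F,b}\}$ to a pseudonatural transformation $\eta_F \colon F \To I(GrF)$. For $f \colon b \to c$ in $B$, I need an invertible $2$-cell comparing $\eta_{F,c} \circ F_f$ with $\lift{f} \circ \eta_{F,b}$, where $\lift{f} \colon (GrF)_b \to (GrF)_c$ is the transport functor of Proposition \ref{prop:transport}. From the proof of Proposition \ref{prop:grisopfib}, the chosen opcartesian lift is $\chi(f,(x,b)) = 1_{F_f x}$ and $\lift{f}(x,b) = (F_f x, c)$, so $\lift{f} \circ \eta_{F,b}$ sends $(x,b)$ to $(F_f x, c)$ and $\eta_{F,c} \circ F_f$ sends it to $(F_f x, c)$ as well — they agree on objects. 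On homs, one side applies $F_f$ then corrects by $\xi$-type isomorphisms, the other applies the transport (which uses $\chi$) then corrects; the required invertible comparison $2$-cell is built from the isomorphisms $\theta$ of $F$ (and $\xi$), and its naturality and the pseudonaturality coherence axioms reduce to the pseudofunctor axioms (\ref{eq:pseudo-unit-left})–(\ref{eq:pseudo-assoc}) for $F$. Since each $\eta_{F,b}$ is an isomorphism and the comparison $2$-cells are invertible, $\eta_F$ is an isomorphism in $\Fun(B,\Catv)$ — indeed its strict inverse is built from the inverse isomorphisms $\eta_{F,b}^{-1}$.

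Finally I would verify $2$-naturality: for a pseudonatural transformation $\alpha \colon F \To G$ the square
\[
	\begin{tikzcd}[column sep = large]
		F \ar[r, "\eta_F"] \ar[d, "\alpha"'] & I(GrF) \ar[d, "I(Gr\,\alpha)"]
		\\
		G \ar[r, "\eta_G"'] & I(GrG)
	\end{tikzcd}
\]
commutes (on the nose, or up to an identity modification), and similarly for modifications $\Gamma \colon \alpha \Tto \beta$. This is a direct unwinding: $I(Gr\,\alpha)_b$ is the restriction of the opfibered functor $a = Gr\,\alpha$ to fibers, which by (\ref{eq:a-on-maps}) acts on $(GrF)_b$ essentially by $\alpha_b$ (modulo the $\xi$, $(\alpha_f)_x$ isomorphisms), and $\eta_{G,b} \circ \alpha_b = I(Gr\,\alpha)_b \circ \eta_{F,b}$ by construction of $\eta$; the compatibility of the $2$-cells follows from the already-established coherences. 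I expect the main obstacle to be purely bookkeeping: carefully tracking the several layers of structure isomorphisms ($\xi$, $\theta$, $(\alpha_f)_x$, and the transport-comparison cells) and confirming that each pseudonaturality/modification coherence axiom for $\eta$ is a consequence of the corresponding axiom for $F$ (resp. $\alpha$, $\Gamma$) — there is no conceptual difficulty, but the coherence diagrams are large. One simplification worth exploiting is that $\eta_F$ is an \emph{invertible} transformation with a \emph{strict} inverse, which lets one phrase $2$-naturality as an equality of pseudofunctors-with-chosen-isomorphisms rather than wrestling with a weaker notion.
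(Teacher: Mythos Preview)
Your proposal is correct and follows the same overall architecture as the paper's proof: identify the fiber $(GrF)_b$ with $F_b$ via $\xi$, build $\eta_{F,b}$ from this, then verify pseudonaturality and $2$-naturality. One small slip: $\eta_{F,b}$ is not literally the identity on objects, since objects of $(GrF)_b$ are pairs $(x,b)$; the paper writes $x \mapsto (x,b)$.

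The one substantive point you miss is that the comparison $2$-cell $\eta_f$ can be taken to be the \emph{identity}: the paper computes $G_f = \lift{f}$ explicitly on hom-objects and checks, using the pseudo-unit axioms (\ref{eq:pseudo-unit-left}) and (\ref{eq:pseudo-unit-right}) applied to $\theta(f,1_b)$ and $\theta(1_c,f)$, that the diagram
\[
\begin{tikzcd}
F_b \ar[r,"F_f"] \ar[d,"\eta_b"'] & F_c \ar[d,"\eta_c"] \\
G_b \ar[r,"G_f"'] & G_c
\end{tikzcd}
\]
commutes on the nose. So $\eta_F$ is a \emph{strict} natural transformation, not merely pseudonatural. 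This is exactly the simplification that dissolves the ``large coherence diagrams'' you anticipate as the main obstacle: once $\eta_f = \mathrm{id}$, the remaining $2$-naturality checks (for $\alpha$ and $\Gamma$) reduce directly to the two axioms for pseudonatural transformations in \cite[\S 1.2]{leinsterbicats}, with no $\theta$-bookkeeping left over. Your plan would work, but you would be carrying around comparison cells that are secretly identities.
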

\begin{proof}
	Let $F\colon B \to \Catv$ be a pseudofunctor and let $G$ be the pseudofunctor $(Gr F)_\bullet$ obtained by applying $Gr$ and $I$ to $F$.
	We need a $2$-natural family of isomorphisms (i.e.\ invertible pseudonatural transformations) $\eta_F \colon F \Rightarrow G$.

	For a fixed $F$, we will write $\eta$ instead of $\eta_F$ for brevity.
	We first produce functors $\eta_b \colon F_b \to G_b$ for each $b \in B$, and show that each $\eta_b$ is an isomorphism of categories.
	Objects of $G_b$ are of the form $(x, b)$ where $x \in F_b$, so we may take the action of $\eta_b$ on objects to be $x \mapsto (x,b)$.
	The hom-objects $G_b\big( (x,b), (y,b)\big)$ are precisely the $f = 1_b$ part of
	\[
		GrF\big( (x,b), (y,b)\big) = \coprod_{f \colon b \to b} F_b(F_f x, y).
	\]
	We then take $\eta_b$ on morphisms to be the composite
	\[
		\begin{tikzcd}
		F_b(x,y) \ar[r, "\cong"', "- \circ \xi_x"] & F_b(F_{1_b}x,y) = G_b\big((x,b),(y,b)\big).
		\end{tikzcd}
	\] 
	This yields an isomorphism of categories $\eta_b \colon F_b \cong G_b$.

	Next, for each $f \colon b \to c$, we need an invertible $\eta_f$:
	\[	
		\begin{tikzcd}
			F_b \ar[r, "F_f"]\ar[d, "\eta_b", swap] & F_b \ar[d, "\eta_b"]\\
			G_b \ar[r, "G_f", swap] \ar[ur, Rightarrow,"\eta_f", "\cong"', start anchor={north east}, end anchor={south west}, shorten <=0.7em, shorten >=0.7em]& G_b
		\end{tikzcd}
	\]
	In fact, we may take $\eta_f$ to be the identity. 
	To see this, first note that $G_f$ is given by
	$
		G_f (x,b) = (F_f x,c)
	$
	on objects and
	\[
		\begin{tikzcd}[row sep = small]
			G_b\big((x,b),(y,b)\big)\ar[d, equals] \ar[rr, dotted, "G_f"] & & G_b\big((F_f x,c),( F_f y ,c) \big) \ar[d, equals]
			\\
			F_b(F_{1_b}x, y) \ar[r, "F_f"] & F_b(F_f F_{1_b}x, F_f y) \ar[r, "\cong"] & F_b(F_{1_b} F_f x, F_f y) 
		\end{tikzcd}
	\]
	on morphisms.
	Expressing both $\eta_b F_f$ and $G_f \eta_b$ in terms of $\xi$ and $\theta$, we see that the relevant diagram is given by
	\begin{equation} \label{eq:alphaf}
		\begin{tikzcd}
			\1 \ar[rr, "\xi_{F_f x}"] \ar[d, "\xi_x"'] & & F_b(F_{1_b} F_f x, F_f x) 
			\\
			F_b(F_{1_b}x,x) \ar[r, "F_f"] & F_b(F_f F_{1_b}x,F_f x) \ar[r, "\theta({f, 1_b})^{-1}", "\cong"'] & F_b(F_f x, F_f x) \ar[u, "\theta({1_b, f})"', "\cong"]
		\end{tikzcd}
	\end{equation}
	tensored with $F_b(x,y) \xrightarrow{F_f} F_b(F_fx,F_f y)$, then applying composition in $F_b$.
	The arrow $\xi_{F_f x}$ gives rise to $\eta_b F_f$, while the composite
	$
		\theta(1_b, f) \theta(f, 1_b)^{-1} F_f \xi_x
	$
	gives rise to $G_f \eta_b$, so that $\eta_b F_f = G_f \eta_b$ if (\ref{eq:alphaf}) commutes.
	This is the case, because the following diagram commutes (both composites being equal to $1_{F_f x}$ by (\ref{eq:pseudo-unit-left}) and (\ref{eq:pseudo-unit-right}))
	\[
		\begin{tikzcd}
			\1 \ar[rr, "\xi_{F_f x}"] \ar[d, "\xi_x"'] & & F_b(F_{1_b} F_f x, F_f x) \ar[d, "\theta({1_b, f})^{-1}", "\cong"']
			\\
			F_b(F_{1_b}x,x) \ar[r, "F_f"] & F_b(F_f F_{1_b}x,F_f x) \ar[r, "\theta({f, 1_b})^{-1}", "\cong"'] & F_b(F_f x, F_f x) 
		\end{tikzcd}	
	\]
	and $\theta(1_b,f)$ and $\theta(1_b, f)^{-1}$ are mutual inverses.

	We thus have invertible pseudonatural transformations $\eta_F \colon F \To G$ for each $F \in \Fun(B, \Catv)$.
	It remains to be shown that these are part of a $2$-natural transformation $\eta \colon 1 \To I \circ Gr$.
	Let $F'\colon B \to \Catv$ be another pseudofunctor, and $G' = (Gr F')_\bullet$.
	For pseudonatural transformations $\alpha, \alpha' \colon F \To F'$ and modification $\Gamma \colon \alpha \Tto \alpha'$ we need the following diagram to commute:
	\[
		\begin{tikzcd}[row sep = large, column sep = huge]
			F \ar[d, "\eta_F"', Rightarrow] \ar[r,"\alpha", Rightarrow, bend left= 20, ""{name = U, below}] \ar[r,"\alpha'"', Rightarrow, bend right=20, ""{name = D, above}] & F' \ar[d, "\eta_{F'}", Rightarrow] \ar[from = U, to = D, symbol = \Tto, "\;\Gamma"]
			\\
			G \ar[r, "I Gr(\alpha)", Rightarrow, bend left = 20, ""{name = UU, below}] \ar[r, "IGr(\alpha')"', Rightarrow, bend right = 20, ""{name = DD, above}] \ar[from = UU, to = DD, symbol = \Tto, "\,IGr(\Gamma)", shift right = 4] & G'
		\end{tikzcd}
	\]

	We first verify that $\eta_{F'} \cdot \alpha = IGr(\alpha)\cdot \eta_F$ for any $\alpha$.
	Fixing $b \in B$, the appropriate diagram on objects (of $F_b, F'_b$ etc.) obviously commutes, while on morphisms we need the following diagram to commute:
	\[
		\begin{tikzcd}[sep = large]
			F_b(x,y) \ar[rr, "\alpha_b"] \ar[d, "- \circ \xi_x"'] & & F'_b(\alpha_b x, \alpha_b y) \ar[d, "- \circ \xi'_{\alpha_b x}"]
			\\
			F_b(F_{1_b}x, y) \ar[r, "\alpha_b"] & F'_b(\alpha_b F'_{1_b} x, \alpha_b y) \ar[r, "- \circ (\alpha_{1_b})_x"] & F'_b(F'_{1_b} \alpha_b x, \alpha_b y)
		\end{tikzcd}
	\]
	But this is precisely the second axiom in \cite[\S 1.2]{leinsterbicats} that $\alpha$ satisfies.
	Similarly, for each $f \colon b\to c$ in $B$, the relevant diagram involving $\alpha_f$ also commutes because of the first axiom in \cite[\S 1.2]{leinsterbicats} that $\alpha$ satisfies.

	Next, we verify that $\eta_{F'} \cdot \Gamma = IGr(\Gamma) \cdot \eta_F$ for any $\Gamma$.
	The components of $\Gamma$ are natural transformations $\Gamma_b \colon \alpha_b \To \alpha'_b$ for each $b \in B$, which in turn have components $\Gamma_{b,x} \colon \1 \to F'_b(\alpha_b x, \alpha'_b x)$ for each $x \in F_b$.
	The components $(\eta_{F}\cdot \Gamma)_{b,x}$ are given by the composite
	\[
		\1 \xrightarrow{\Gamma_{b,x}}  F'_b(\alpha_b x, \alpha'_b x) \cong  F'_b(F'_{1_b}\alpha_b x, \alpha'_b x) = G'_b\left( (\alpha_b x, b), (\alpha'_b x, b) \right),
	\]
	but these are exactly the components $IGr(\Gamma)_{b, (x,b)}$.
	Since $(\eta_F)_b$ sends $x \in F_b$ to $(x,b)$, we have $(IGr(\Gamma) \cdot \eta_F)_{b,x} = IGr(\Gamma)_{b, (x,b)} = (\eta_{F}\cdot \Gamma)_{b,x}$, as desired.
\end{proof}

Putting Propositions \ref{prop:IGr} and \ref{prop:GrI} together, we obtain:
\begin{thm}\label{mainthm}
	Let $\V$ satisfy the assumptions in \S \ref{sec:grcon}, and let $B$ be a category. 
	There is a $2$-equivalence
	\[
		\OpFib(B_\V) \cong \Fun(B, \Catv).
	\]
\end{thm}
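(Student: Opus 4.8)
The plan is to simply combine the two preceding propositions. We have the $2$-functors
\[
	Gr \colon \Fun(B, \Catv) \longrightarrow \OpFib(B_\V)
	\qquad\text{and}\qquad
	I \colon \OpFib(B_\V) \longrightarrow \Fun(B, \Catv),
\]
the latter being the inverse Grothendieck construction of \S\ref{sec:invgrcon} applied to the base $\B = B_\V$, which is legitimate because our standing assumptions (Property 4, as recorded in Remark \ref{rem:identifyBBV0}) give the identification $(B_\V)_0 = B$. Proposition \ref{prop:GrI} provides a $2$-natural isomorphism $\eta \colon 1_{\Fun(B,\Catv)} \Rightarrow I \circ Gr$, and Proposition \ref{prop:IGr} provides a $2$-natural isomorphism $\epsilon \colon Gr \circ I \Rightarrow 1_{\OpFib(B_\V)}$. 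A pair of $2$-functors together with such invertible comparison $2$-cells between the two round-trips and the respective identities is by definition an equivalence of $2$-categories, so this already yields the claim — and in fact something slightly stronger, since both comparison cells are honest $2$-natural isomorphisms rather than mere pseudonatural equivalences.

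Concretely, I would: (1) recall the $2$-functors $Gr$ (from \S\ref{sec:grcon}) and $I$ (from \S\ref{sec:invgrcon}), noting that $I$ may be evaluated at $\B = B_\V$ precisely because $(B_\V)_0 = B$; (2) invoke Propositions \ref{prop:GrI} and \ref{prop:IGr} to produce $\eta$ and $\epsilon$; and (3) conclude that $Gr$ and $I$ are mutually inverse up to $2$-natural isomorphism. If one prefers to phrase the conclusion hom-category-wise — that $I$ induces an equivalence of categories $\OpFib(B_\V)(p,p') \simeq \Fun(B,\Catv)(Ip, Ip')$ for every pair of opfibrations over $B_\V$ — this follows from $\eta$ and $\epsilon$ by the usual argument: in the triangle of functors on hom-categories induced by $I$, by $Gr$, and by one of the comparison isomorphisms, two of the three legs are equivalences, hence so is the third.

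I do not anticipate any genuine obstacle here: all the real content has already been discharged in \S\ref{sec:grcon}--\S\ref{sec:grcorr}, and the present statement is a formal consequence of Propositions \ref{prop:IGr} and \ref{prop:GrI}. The only mild point of hygiene is to use compatible normalizations of $\eta$ and $\epsilon$ throughout; for a bare $2$-equivalence the triangle identities are not needed, but should an \emph{adjoint} $2$-equivalence be desired, one would modify $\epsilon$ by whiskering with $\eta$ in the standard way to enforce them.
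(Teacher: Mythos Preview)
Your proposal is correct and matches the paper's own argument: the theorem is stated immediately after Propositions \ref{prop:IGr} and \ref{prop:GrI} as their formal consequence, with the proof amounting to ``putting them together.'' Your additional remarks on hom-category equivalences and adjoint $2$-equivalences are accurate but go beyond what the paper records.
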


	We have proved an enriched version of the Grothendieck correspondence when $\V$ satisfies the assumptions in \S \ref{sec:grcon}, which yields the classical result by Grothendieck when $\V = \Set$.

	However, this is somewhat unsatisfactory for reasons we have mentioned in the Introduction.
	First, requiring that $\1$ is terminal and $B \cong (B_\V)_0$ seems rather restrictive, ruling out examples such as $\V = \Vect$.
	Next, even when these conditions apply, such as  when $\V = \Top, \sSet$ or $\Cat$, this result really only considers opfibrations over a  `discrete' base $B_\V$.
	Subsequent work will involve removing various assumptions on $\V$ and retaining more $\V$-structure from an opfibration over a non-discrete base.

\appendix

\section{Appendix: Enriched categories and $\Catv$}
\label{sec:appendix}

We recall some basic information about (small) enriched categories (all of which can be found in \cite[Ch. 3]{riehlhct}, or \cite{kellyenriched}).
Throughout, $\V$ will denote a locally small monoidal category with monoidal product $\otimes \colon \V \times \V \to \V$ and monoidal unit $\1$.

\begin{defi}
A \emph{$\V$-category} $\C$ is the data of:
\begin{enumerate}
\item A set of objects, which we will denote by $\C$ or $Ob(\C)$, where the former is an obvious abuse of notation.
\item For every pair of objects $c,d$ in $\C$, an object $\C(c,d)$ of $\V$.
\item For every object $c$ in $\C$ a morphism $1_c \colon \1\to\C(c,c)$ in $\V$. 
\item For each triple of objects $c, d, e$ in $\C$, a morphism in $\V$, $$\circ_{c,d,e}\colon  \C(d,e)\otimes\C(c,d)\to \C(c,e).$$ We will omit subscripts on $\circ$ when it is clear from context.
\end{enumerate}
All of which causes the following diagrams to commute in $\V$:

\[
\begin{aligned}
\begin{tikzcd}
& \C(d,d)\otimes \C(c,d)\arrow[d, "\circ"]
\\
\1\otimes\C(c,d)\arrow[ur, "1_d \otimes 1"]\arrow[r, "\cong"']  & \C(c,d)
\end{tikzcd}
\end{aligned}
\quad
\begin{aligned}
\begin{tikzcd}
\C(c,d)\otimes \C(c,c)\arrow[d, "\circ"'] &
\\ 
\C(c,d)  & \C(c,d)\otimes\1\arrow[ul, "1 \otimes 1_c"']\arrow[l, "\cong"] 
\end{tikzcd}
\end{aligned}
\]
\[
\begin{tikzcd}
\C(d,e)\otimes\C(c,d)\otimes\C(b,c)\arrow[r, "1\otimes\circ"]\arrow[d, "\circ\otimes1"'] & \C(d,e)\otimes\C(b,d)\arrow[d, "\circ"]\\ \C(c,e)\otimes \C(b,c)\ar[r, "\circ"] & \C(b,e)
\end{tikzcd}
\]
\end{defi}

Each $\V$-morphism $f \colon \1 \to \C(b,c)$ (i.e.\ a map in the underlying category $\C_0$) induces \emph{pre-} and \emph{post-composition} $\V$-morphisms:
	\[
		\begin{tikzcd}
			-\circ f \colon \C(c,d) \cong \C(c,d)\otimes \1 \ar[r, "1\otimes f"] & \C(c,d) \otimes \C(b,c) \ar[r, "\circ"] & \C(b,d)
			\\
			f \circ - \colon \C(a,b) \cong \1 \otimes \C(a,b) \ar[r, "f \otimes 1"] & \C(b,c) \otimes \C(a,b) \ar[r, "\circ"] & \C(a,c)			
		\end{tikzcd}
	\]
	We say that $f$ is an \emph{isomorphism} if the above composites are $\V$-isomorphisms for all $a,d \in \V$, and that $b$ and $c$ are \emph{isomorphic}.
	This is equivalent to $\C(-,b)$ and $\C(-,c)$ being isomorphic functors $\C_0 \to \V$, with $\C(-,f) = f\circ -$ the natural isomorphism between them.

\begin{defi}
\label{def:Vfunctor}
A functor of $\V$-categories, or \emph{$\V$-functor}, $F\colon \C\to \D$ consists of a function $F\colon  Ob(\C)\to Ob(\D)$, and for all $c,d \in \C$ a $\V$-morphism 
$$F_{c,d}\colon \C(c,d)\to \D(Fc,Fd)$$
such that the following diagrams commute in $\V$:
\[
\begin{aligned}
\begin{tikzcd}
& \C(c,c) \ar[dd, "F_{c,c}"]
\\
\1 \ar[ur, "1_b"] \ar[dr, "1_{Fc}"'] &
\\
& \D(Fc,Fc)
\end{tikzcd}
\end{aligned}
\qquad
\begin{aligned}
\begin{tikzcd}
\C(d,e)\otimes \C(c,d) \ar[r, "\circ"] \ar[dd, "F_{d,e} \otimes F_{c,d}"'] & \C(c,e) \ar[dd, "F_{c,e}"]
\\
& \phantom{\1}
\\
\D(Fd, Fe) \otimes \D(Fc,Fd) \ar[r,"\circ"] & \D(Fc,Fe)
\end{tikzcd}
\end{aligned}
\]

When it is clear from context, we may omit the subscripts in $F_{c,d}$, and use $F$ for the functor $\C \to \D$, the function $Ob(\C) \to Ob(\D)$ and the $\V$-morphism $\C(c,d) \to \D(Fc,Fd)$.
\end{defi}

\begin{defi}
Let $F,G\colon \C\to \D$ be $\V$-functors. A \emph{natural transformation} of $\V$-functors $\alpha\colon  F\Rightarrow G$ is a family of $\V$-morphisms $\alpha_b\colon \1\to \D(Fc,Gc)$ for each $c \in \C$ such that the following diagram commutes in $\V$:
\[
	\begin{tikzcd}[sep = large]
		\C(c,d) \ar[r, "F"] \ar[d, "G"'] & \D(Fc,Fd) \ar[d, "\alpha_d \circ -"]
		\\
		\D(Gc, Gd) \ar[r, "- \circ \alpha_b"] & \D(Fc, Gd)
	\end{tikzcd}
\]
\end{defi}

\begin{defi}\label{def:whiskering}
	Given $\V$-functors $F,G,H,K$, and a natural transformation $\alpha$ fitting into the following diagram,
	\[	
		\begin{tikzcd}
			\B \ar[r, "H"]
			&
			\C \ar[r, bend left, "F", ""{name = U, below}] \ar[r, bend right, "G"', ""{name = D}] \ar[from = U, to = D, Rightarrow, "\alpha"]
			&
			\D \ar[r, "K"]
			&
			\cE
		\end{tikzcd}
	\]
	let $K \circ \alpha \circ H \colon KFH \To KGH$ (or simply $K \alpha H$) denote the natural transformation whose components for each $b \in \B$ are given by the composite
	\[
		\begin{tikzcd}
			(K \alpha H)_b \colon
			\1 \ar[r, "\alpha_{Hb}"]
			&
			\D(FHb, GHb) \ar[r, "K"]
			&
			\cE(KFHb, KGHb)
		\end{tikzcd}
		.
	\]	
	This process is known as \textbf{whiskering}, and $K \alpha H$ is the \textbf{whiskered composite} of $K$, $\alpha$ and $H$.
	If $K$ (resp.\ $H$) is the identity, we write $\alpha H$ (resp.\ $K \alpha$) for the corresponding whiskered composite.	
\end{defi}

\begin{defi}
Let $\alpha\colon  F \To G$ and $\beta\colon  G \To H$ be natural transformations, where $F,G,H\colon  \C \to \D$. Their \textbf{vertical composite} is denoted $\beta \cdot \alpha\colon  F \To H$, and has components $(\beta \cdot \alpha)_b$ given by
\[
	\1 \cong \1 \otimes \1 \xrightarrow{\beta_b \otimes \alpha_b} \D(Gc,Hc) \otimes \D(Fc,Gc) \xrightarrow{\circ} \D(Fc,Hc).
\]
\end{defi}

\begin{defi}
Given $\alpha \colon F \To G$ and $\beta \colon J \To K$ as follows
\[
	\begin{tikzcd}
		\B \ar[r, bend left, "F", ""{name = U1, below}] \ar[r, bend right, "G"', ""{name = D1}] \ar[from = U1, to = D1, Rightarrow, "\alpha"]
		&
		\C \ar[r, bend left, "J", ""{name = U2, below}] \ar[r, bend right, "K"', ""{name = D2}] \ar[from = U2, to = D2, Rightarrow, "\beta"]
		&
		\D
	\end{tikzcd}
\]
their \textbf{horizontal composite} $\beta \circ \alpha \colon JF \To KG$, or simply $\beta \alpha$, is the composite $(\beta G) \cdot (J\alpha)$, or equivalently, the composite $(K \alpha) \cdot (\beta F) $.
\end{defi}

\begin{defi}
	Let $\Catv$ denote the strict 2-category of $\V$-categories, $\V$-functors, and natural transformations.
\end{defi}

\addcontentsline{toc}{section}{References}
\bibliography{references}

% \Urlmuskip=0mu plus 3mu\relax
% \bibliographystyle{plain}
% \bibliography{references}
\end{document}